\newtheorem{theorem}{Theorem}[section]
\newtheorem{lemma}[theorem]{Lemma}
\newtheorem{corollary}[theorem]{Corollary}
\theoremstyle{definition}
\newtheorem{remark}[theorem]{Remark}
\numberwithin{equation}{section}
\begin{document}

\title[Marcinkiewicz integral]
{A sparse domination for  the Marcinkiewicz integral with rough kernel and applications}
\author[X. Tao]{Xiangxing Tao}
\address{Department of Mathematics, School of Science, Zhejiang University of Science and Technology,
Hangzhou 310023, People's Republic of China }
\email{xxtao@zust.edu.cn}

\author[G. Hu]{Guoen Hu}
\address{Guoen Hu, Department of  Applied Mathematics, Zhengzhou Information Science and Technology Institute\\
Zhengzhou 450001,
P. R. China}
\email{guoenxx@163.com}
\thanks{The research of the first author was supported by NNSF of China under grant \#11771399 , and the  research of the second (corresponding author was supported by NNSF of China under grant $\#$11871108.}

\keywords{Marcinkiewicz integral, weighted bound, sparse domination, maximal operator.}
\subjclass{42B25, 47A30, 47A63}
%% NB There should be only one primary classification, and zero or
%more secondary classifications.

\begin{abstract}
Let $\Omega$ be homogeneous of degree zero, have mean value zero and integrable on the unit sphere, and $\mu_{\Omega}$ be the higher-dimensional Marcinkiewicz
integral defined by
$$\mu_\Omega(f)(x)= \Big(\int_0^\infty\Big|\int_{|x-y|\leq t}\frac{\Omega(x-y)}{|x-y|^{n-1}}f(y)dy\Big|^2\frac{dt}{t^3}\Big)^{1/2}.
$$ In this paper, the authors establish a bilinear sparse domination for $\mu_{\Omega}$ under the assumption $\Omega\in L^{\infty}(S^{n-1})$. As applications, some quantitative weighted bounds for $\mu_{\Omega}$ are obtained.
\end{abstract}
\maketitle
\section{Introduction}

We will work on $\mathbb{R}^n$, $n\geq 2$. Let $\Omega$ be
homogeneous of degree zero, integrable and have mean value zero on
the unit sphere $S^{n-1}$. Define the Marcinkiewicz integral
operator $\mu_\Omega$ by
\begin{eqnarray*}\mu_\Omega(f)(x)= \Big(\int_0^\infty|F_{\Omega,\,
t}f(x)|^2\frac{dt}{t^3}\Big)^{1/2},
\end{eqnarray*}
where $$F_{\Omega,
t}f(x)= \int_{|x-y|\leq t}\frac{\Omega(x-y)}{|x-y|^{n-1}}f(y)dy $$ for
$f\in \mathcal{S}(\mathbb{R}^n)$.  Stein \cite{st}
proved that if $\Omega\in {\rm Lip}_{\rho}(S^{n-1})$ with
$\rho\in (0,\,1]$, then $\mu_\Omega$ is bounded on
$L^p(\mathbb{R}^n)$ for $p\in (1,\,2]$. Benedek, Calder\'on and
Panzon \cite{bcp} showed that the $L^p(\mathbb{R}^n)$ boundedness $(p\in
(1,\,\infty)$) of $\mu_\Omega$ holds true  under the
condition that $\Omega\in C^1(S^{n-1})$.  Walsh \cite{wal} proved
that for each $p\in (1,\,\infty)$, $\Omega\in L(\ln L)^{1/r}(\ln \ln
L)^{2(1-2/r')}(S^{n-1})$ is a sufficient condition such that
$\mu_\Omega$ is bounded on $L^{p}(\mathbb{R}^n)$, where
$r=\min\{p,\,p'\}$ and $p'=p/(p-1)$. Ding, Fan and Pan \cite{dfp}
proved that if $\Omega\in H^1(S^{n-1})$ (the Hardy space on
$S^{n-1}$), then $\mu_\Omega$ is bounded on
$L^p(\mathbb{R}^n)$  for all $p\in (1,\,\infty)$; Al-Salman,  Al-Qassem,   Cheng and Pan
\cite{aacp} proved that $\Omega\in L(\ln L)^{1/2}(S^{n-1})$ is a
sufficient condition such that $\mu_\Omega$ is bounded on
$L^p(\mathbb{R}^n)$ for all $p\in(1,\,\infty)$.  For other works about the operator $\mu_{\Omega}$, see \cite{a,   dfp,  fans} and the related references therein.

Let   $A_p(\mathbb{R}^n)$ $(p\in [1,\,\infty))$ be the weight function class of Muckenhoupt, that is, $$A_{p}(\mathbb{R}^n)=\{w\,\hbox{is\,\,nonnegative\,\,and\,\,locally\,\,integrable\,\,in}\,\,\mathbb{R}^n:\,[w]_{A_p}<\infty\},$$
where and in what follows, $$[w]_{A_p}:=\sup_{Q}\Big(\frac{1}{|Q|}\int_Qw(x)dx\Big)\Big(\frac{1}{|Q|}\int_{Q}w^{1-p'}(x)dx\Big)^{p-1},\,\,\,p\in (1,\,\infty),$$
and
$$[w]_{A_1}:=\sup_{x\in\mathbb{R}^d}\frac{Mw(x)}{w(x)},$$
with  $M$   the Hardy-Littlewood maximal operator. $[w]_{A_p}$ is called the $A_p$ constant of $w$, see  \cite[Chapter 9]{gra2} for the properties of $A_p(\mathbb{R}^n)$). In the last several years,  there has been significant  progeress in the study of  weighted bounds for rough singular integral operators. Hyt\"onen, Roncal and Tapiola \cite{hyta} considered the weighted  bounds of rough homogeneous singular integral operator defined by
$$T_{\Omega}f(x)={\rm p. \,v.}\int_{\mathbb{R}^n}\frac{\Omega(y')}{|y|^n}f(x-y)dy,$$
where $\Omega$ is homogeneous of degree zero, integrable on the unit sphere $S^{n-1}$ and has mean value zero. For $w\in \cup_{p>1}A_p(\mathbb{R}^n)$,
$[w]_{A_{\infty}}$ is the $A_{\infty}$ constant of $w$, defined by
$$[w]_{A_{\infty}}=\sup_{Q\subset \mathbb{R}^n}\frac{1}{w(Q)}\int_{Q}M(w\chi_Q)(x)dx,$$
see \cite{wil}.
By a quantitative weighted estimate for the Calder\'on-Zygmund operators,  approximation to the identity and interpolation, Hyt\"onen, Roncal  and Tapiola
(see Theorem 1.4 in \cite{hyta}) proved that
\begin{theorem}\label{dingli1.1} Let $\Omega$ be homogeneous of degree zero,   have mean value zero on $S^{n-1}$ and $\Omega\in L^{\infty}(S^{n-1})$.
Then for $p\in (1,\,\infty)$ and $w\in A_{p}(\mathbb{R}^n)$,
\begin{eqnarray}\label{eq:1.estimate}\|T_{\Omega}f\|_{L^p(\mathbb{R}^n,\,w)}&\lesssim & \|\Omega\|_{L^{\infty}(S^{n-1})}[w]_{A_p}^{\frac{1}{p}}\max\{[w]_{A_{\infty}}^{\frac{1}{p'}},\,[w^{1-p'}]_{A_{\infty}}^{\frac{1}{p}}\}\\
&&\times\max\{[w]_{A_{\infty}},\,[w^{1-p'}]_{A_{\infty}}\}
\|f\|_{L^p(\mathbb{R}^n,\,w)}.\nonumber
\end{eqnarray}
\end{theorem}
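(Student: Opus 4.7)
The plan is to implement the three-step strategy indicated in the paragraph preceding the statement: (i) dyadic decomposition of the kernel, (ii) a sharp quantitative weighted Calder\'on-Zygmund bound applied to each smooth piece, and (iii) interpolation with change of measures to sum the pieces.

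Fix a smooth radial $\varphi$ with $\mathrm{supp}\,\varphi\subset\{1/2<|y|<2\}$ and $\sum_{k\in\mathbb{Z}}\varphi(2^{-k}y)\equiv 1$ on $\mathbb{R}^n\setminus\{0\}$. Set $K_k(y)=|y|^{-n}\Omega(y/|y|)\varphi(2^{-k}y)$ and $T_k f=K_k*f$, so that $T_\Omega=\sum_k T_k$. A mollification at scale $2^k$ (the ``approximation to the identity'' alluded to in the paper) splits $T_k=\widetilde T_k+R_k$, where $\widetilde T_k$ is a genuine Calder\'on-Zygmund operator whose size and smoothness constants are controlled by $\|\Omega\|_{L^\infty(S^{n-1})}$ uniformly in $k$, while $R_k$ carries geometric unweighted operator-norm decay $\|R_k\|_{L^2\to L^2}\lesssim\|\Omega\|_{L^\infty}2^{-\alpha k}$ for some $\alpha>0$, supplied by the Fourier decay estimate of Duoandikoetxea and Rubio de Francia for rough annular pieces.

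Next, to each smooth $\widetilde T_k$ one applies the Hyt\"onen-P\'erez-Rela sharp weighted bound for Calder\'on-Zygmund operators, obtaining, uniformly in $k$,
\begin{equation*}
\|\widetilde T_k f\|_{L^p(\mathbb{R}^n,w)}\lesssim\|\Omega\|_{L^\infty(S^{n-1})}[w]_{A_p}^{1/p}\max\bigl\{[w]_{A_\infty}^{1/p'},[w^{1-p'}]_{A_\infty}^{1/p}\bigr\}\|f\|_{L^p(\mathbb{R}^n,w)}.
\end{equation*}
Since this bound exhibits no $k$-decay, direct summation fails. To remedy this, interpolate (Stein-Weiss, with change of measures) the unweighted $L^2$ decay of $R_k$ and an analogous off-diagonal $L^{p_0}$-decay of $\widetilde T_k$ against the weighted estimate applied on $L^p(w^{1+\varepsilon})$, where $\varepsilon\sim 1/\max\{[w]_{A_\infty},[w^{1-p'}]_{A_\infty}\}$ is the reverse H\"older exponent furnished by the $A_\infty$ hypothesis. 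A careful choice of the interpolation parameter $\theta=\theta(\varepsilon)$ produces a geometrically summable weighted estimate for each $T_k$, at the cost of an extra factor $\max\{[w]_{A_\infty},[w^{1-p'}]_{A_\infty}\}$ in the constant, matching exactly the last factor in the target inequality. Summation over $k\in\mathbb{Z}$ delivers Theorem~\ref{dingli1.1}.

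The main technical obstacle is the bookkeeping in this final interpolation: one must select the auxiliary exponent and the interpolation parameter so that the geometric $k$-decay survives while the $A_\infty$ constants contribute only to the first power and the $A_p$ constant appears only to the power $1/p$. This requires simultaneously the sharp quantitative form of the $A_2$ theorem for Calder\'on-Zygmund operators and the sharp quantitative reverse H\"older inequality for $A_\infty$ weights, and it is in balancing these two ingredients that all the sharpness of the final weighted bound is concentrated.
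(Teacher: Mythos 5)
The paper does not actually prove Theorem \ref{dingli1.1}; it is a cited result of Hyt\"onen, Roncal and Tapiola (Theorem 1.4 of \cite{hyta}), and the text around it only names the three ingredients (quantitative weighted Calder\'on--Zygmund bound, approximation to the identity, interpolation with change of measures). Your proposal follows that skeleton, but the concrete decomposition you propose is structurally flawed in a way that cannot be repaired without introducing the missing second parameter.

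The defect is in the split $T_k=\widetilde T_k+R_k$ obtained by mollifying the $k$-th annular piece at scale $2^k$. Because the $K_k$ are exact $L^1$-normalized dilates of $K_0$ and your mollification scale tracks the annulus scale, the operators $R_k$ are all $L^2\to L^2$-isometric dilates of $R_0$; hence $\|R_k\|_{L^2\to L^2}=\|R_0\|_{L^2\to L^2}$ is a constant independent of $k$, and there is no $2^{-\alpha k}$ decay. The Duoandikoetxea--Rubio de Francia Fourier estimate does not supply it either: that estimate gives $|\widehat{K_k}(\xi)|\lesssim\min\{|2^k\xi|,|2^k\xi|^{-\alpha}\}$, i.e.\ decay in the \emph{frequency} variable relative to the annulus scale (which one exploits via Littlewood--Paley orthogonality), not decay in $k$ for a fixed piece. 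Even if such decay existed, the proposed ``summation over $k\in\mathbb{Z}$'' of a factor $2^{-\alpha k\theta}$ diverges as $k\to-\infty$.

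What the cited proof actually does is introduce a separate smoothing level $l\ge 0$: the $k$-th annular kernel is mollified at scale $2^{k-l}$, i.e.\ a small fraction $2^{-l}$ of the annulus width, producing $T^l=\sum_k T_k^l$. Then $T^l$ is Calder\'on--Zygmund with Dini constant growing in $l$, while $\|T_\Omega-T^l\|_{L^2\to L^2}\lesssim\|\Omega\|_{L^\infty}2^{-\alpha l}$ (this is where the Fourier decay and Littlewood--Paley orthogonality enter). The interpolation with change of measures is performed on the single split $T_\Omega=T^l+(T_\Omega-T^l)$, and $l$ is then optimized (or summed over $l\ge 1$ after telescoping) against the reverse H\"older gain $\varepsilon\sim 1/[w]_{A_\infty}$. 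Without this second parameter your argument has no source of decay and the summation cannot close; repairing it requires rebuilding the proof around the smoothing level $l$ rather than the annulus index $k$.
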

Conde-Alonso,   Culiuc,  Di Plinio and  Ou   \cite{hyta}  proved that for bounded function $f$ and $g$,  and $p\in (1,\,\infty)$,
\begin{eqnarray}\label{eq:1.ccdo}\Big|T_{\Omega}f(x)g(x)dx\Big|\lesssim p'\sup_{\mathcal{S}}\sum_{Q\in\mathcal{S}}\langle |f|\rangle_{Q}\langle |g|\rangle_{Q,\,p}|Q|,\end{eqnarray}
where the supremum is taken over all sparse family of cubes, $\langle |f|\rangle_Q$ denoted the mean value of $|f|$ on $Q$, and for $r\in (0,\,\infty)$, $\langle |f|\rangle_{Q,\,r}=
\big(\langle |f|^r\rangle_{Q}\big)^{1/r}.$ For a family of cubes $\mathcal{S}$, we say that $\mathcal{S}$ is $\eta$-sparse, $\eta\in (0,\,1)$, if  for each fixed $Q\in \mathcal{S}$, there exists a measurable subset $E_Q\subset Q$, such that $|E_Q|\geq \eta|Q|$ and $\{E_{Q}\}$ are pairwise disjoint. By  (\ref{eq:1.ccdo}) Conde-Alonso et al recovered the conclusion in Theorem \ref{dingli1.1}.
By some new estimates for bilinear sparse operators, Li, P\'erez, Rivera-Rios and  Roncal \cite{lpr} proved that
\begin{eqnarray*}\|T_{\Omega}f\|_{L^p(\mathbb{R}^n,\,w)}&\lesssim & \|\Omega\|_{L^{\infty}(S^{n-1})}[w]_{A_p}^{\frac{1}{p}}\max\{[w]_{A_{\infty}}^{\frac{1}{p'}},\,[w^{1-p'}]_{A_{\infty}}^{\frac{1}{p}}\}\\
&&\times\min\{[w]_{A_{\infty}},\,[w^{1-p'}]_{A_{\infty}}\}
\|f\|_{L^p(\mathbb{R}^n,\,w)},\nonumber
\end{eqnarray*}
which improved (\ref{eq:1.estimate}). Moreover, Li et al. \cite{lpr} proved that for any $w\in A_1(\mathbb{R}^n)$,
\begin{eqnarray*}\|T_{\Omega}f\|_{L^{1,\,\infty}(\mathbb{R}^n,\,w)}&\lesssim & \|\Omega\|_{L^{\infty}(S^{n-1})}[w]_{A_1}[w]_{A_{\infty}}\log ({\rm e}+[w]_{A_{\infty}})
\|f\|_{L^1(\mathbb{R}^n,\,w)}.
\end{eqnarray*}
They also established the weighted inequality of Coifman-Fefferman type that for $p\in [1,\,\infty)$ and $w\in A_{\infty}(\mathbb{R}^n)$, $$\|T_{\Omega}(f)\|_{L^p(\mathbb{R}^n,\,w)}\lesssim\|\Omega\|_{L^{\infty}(S^{n-1})}
[w]_{A_{\infty}}^2\|Mf\|_{L^p(\mathbb{R}^n,\,w)},$$

Let us return to the Marcinkiewicz integral. Ding, Fan and Pan \cite{dfp2} considered the boundedness on $L^p({\mathbb R}^n,\,w)$
with $w\in A_p(\mathbb{R}^n)$ for $\mu_{\Omega}$. They proved that
\begin{theorem}
Let $\Omega$ be homogeneous of degree zero, have mean value zero on $S^{n-1}$, and $\Omega\in L^{q}(S^{n-1})$ for some $q\in (1,\,\infty]$. Then for $p\in (q',\,\infty)$ and $w\in A_{p/q'}(\mathbb{R}^n)$ or $p\in (1,\,q)$ and $w^{1-p'}\in A_{p'/q'}(\mathbb{R}^n)$, then there exists a constant $C$ depending on $n$, $p$ and $w$, such that
$$\|\mu_{\Omega}(f)\|_{L^p(\mathbb{R}^n,\,w)}\leq C\|f\|_{L^p(\mathbb{R}^n,\,w)}.$$
\end{theorem}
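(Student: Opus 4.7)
The plan is to follow the classical Coifman-Fefferman scheme, exploiting that $\mu_\Omega$ is a vector-valued convolution: with $K_t(u)=\Omega(u)|u|^{-(n-1)}\chi_{\{|u|\le t\}}(u)$, we have
\[
\mu_\Omega f(x)=\|K_t\ast f(x)\|_{L^2(\mathbb{R}_+;\,dt/t^3)}.
\]
First, I would record the unweighted bound $\mu_\Omega:L^p(\mathbb{R}^n)\to L^p(\mathbb{R}^n)$ for every $p\in(1,\infty)$, which is available since $\Omega\in L^q(S^{n-1})\subset L(\log L)^{1/2}(S^{n-1})$ for $q>1$ and hence the result of Al-Salman, Al-Qassem, Cheng and Pan \cite{aacp} applies; in particular the weak-type $(q',q')$ estimate for $\mu_\Omega$ is at our disposal.

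The technical core is a sharp-function inequality of the form
\[
M^{\#}\!\big((\mu_\Omega f)^{\delta}\big)(x)^{1/\delta}\;\lesssim\;M_{q'}f(x),\qquad 0<\delta<1,
\]
where $M_{q'}f=\big(M(|f|^{q'})\big)^{1/q'}$. Given $x$ and a cube $Q\ni x$ with centre $x_Q$, split $f=f_1+f_2$ with $f_1=f\chi_{2Q}$. Sub-additivity $(a+b)^\delta\le a^\delta+b^\delta$ reduces matters to two contributions. The local part $\mu_\Omega f_1$ is handled by Kolmogorov's inequality together with the weak-type $(q',q')$ bound from the first step, producing $\big(|2Q|^{-1}\!\int_{2Q}|f|^{q'}\big)^{1/q'}\lesssim M_{q'}f(x)$. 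For the non-local part I would compare $\mu_\Omega f_2(y)$ with $\mu_\Omega f_2(x_Q)$ for $y\in Q$ via the Minkowski inequality in $L^2(dt/t^3)$, arriving at
\[
\int_{(2Q)^c}|f(z)|\,\Big(\int_0^\infty\!|K_t(y-z)-K_t(x_Q-z)|^2\,\frac{dt}{t^3}\Big)^{1/2}dz,
\]
split $(2Q)^c$ into dyadic annuli $|x_Q-z|\sim 2^k\ell(Q)$, carry out the inner $t$-integral on each annulus, and apply H\"older's inequality with exponents $q$ on $\Omega$ and $q'$ on $f$ to produce a geometrically convergent sum controlled by $M_{q'}f(x)$.

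Once the sharp-function estimate is available, the Fefferman-Stein inequality $\|g\|_{L^p(w)}\lesssim\|M^{\#}g\|_{L^p(w)}$ for $w\in A_\infty$, coupled with the Muckenhoupt characterisation $M_{q'}:L^p(w)\to L^p(w)$ if and only if $w\in A_{p/q'}$, yields the first range $p\in(q',\infty)$, $w\in A_{p/q'}$. The dual range $p\in(1,q)$, $w^{1-p'}\in A_{p'/q'}$ I would recover by a vector-valued duality argument: linearise $\mu_\Omega f$ by pairing with an arbitrary unit-norm $L^2(dt/t^3)$-valued function, exhibiting $\mu_\Omega$ as the supremum of linear operators whose formal adjoints have kernels of the same rough $L^q$ type, then apply the first range to those adjoints.

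The main obstacle is the estimate of the $L^2(dt/t^3)$-norm of the kernel difference $K_t(y-z)-K_t(x_Q-z)$ when $|y-x_Q|\ll|x_Q-z|$. Since $\Omega$ possesses no angular regularity, the difference $\Omega(y-z)-\Omega(x_Q-z)$ cannot be bounded pointwise; the compensation must come from the truncation $\chi_{\{|\cdot|\le t\}}$, whose jump restricts the relevant $t$ to an interval of length $\lesssim|y-x_Q|$ centred near $t\approx|x_Q-z|$. Quantifying this cancellation to extract a factor $(|y-x_Q|/|x_Q-z|)^{\alpha}$ for some $\alpha>0$, and coupling it cleanly with H\"older's inequality in the angular variable (which is where the restriction $w\in A_{p/q'}$ rather than merely $w\in A_p$ is forced), is what drives the dyadic sum and constitutes the heart of the argument.
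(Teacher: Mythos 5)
Your plan rests on the pointwise sharp-function estimate
$M^{\#}\big((\mu_\Omega f)^\delta\big)(x)^{1/\delta}\lesssim M_{q'}f(x)$,
which you propose to obtain, for the non-local part, by applying Minkowski in $L^2(dt/t^3)$ and then estimating
$\big\|K_t(y-z)-K_t(x_Q-z)\big\|_{L^2(dt/t^3)}$
pointwise in $z$. That kernel estimate does not have the decay you assert. Writing $u=y-z$, $v=x_Q-z$ with $|u|\approx|v|\approx 2^k\ell(Q)$ and $|u-v|\lesssim\ell(Q)$, and taking $|u|\le|v|$ without loss, one has
\begin{equation*}
\int_0^\infty|K_t(u)-K_t(v)|^2\,\frac{dt}{t^3}
=\frac{|\Omega(u)|^2}{|u|^{2(n-1)}}\int_{|u|}^{|v|}\frac{dt}{t^3}
+\Big|\frac{\Omega(u)}{|u|^{n-1}}-\frac{\Omega(v)}{|v|^{n-1}}\Big|^2\int_{|v|}^\infty\frac{dt}{t^3}.
\end{equation*}
The first integral is the truncation-jump term you have in mind, and it does carry a factor $\lesssim\ell(Q)/|v|^3$, leading to $2^{-k/2}$ decay after the annular H\"older step. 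But the second integral, coming from the range $t\ge\max(|u|,|v|)$ where \emph{both} truncations are active, equals
$\tfrac{1}{2|v|^2}\big|\Omega(u)/|u|^{n-1}-\Omega(v)/|v|^{n-1}\big|^2$,
and its square root is of order $\big(|\Omega(u)|+|\Omega(v)|\big)/|v|^n$ with \emph{no} factor of $2^{-k}$. Plugging this into your dyadic-annulus/H\"older step yields a contribution $\sum_k M_{q'}f(x)$, which diverges. For $\Omega$ merely in $L^q(S^{n-1})$ there is no quantitative modulus of continuity to supply the missing decay (continuity of translation in $L^q$ has no rate), so the Minkowski-then-H\"older route genuinely fails, and the pointwise sharp-function inequality is not obtained this way. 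You also never invoke the mean-value-zero hypothesis on $\Omega$ in the non-local estimate, which is another signal that the cancellation driving the argument has not actually been captured.

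For comparison: the paper does not prove this statement at all --- it is quoted from Ding, Fan and Pan \cite{dfp2}, whose proof is not of Coifman--Fefferman type. Their argument is in the Duoandikoetxea--Rubio de Francia tradition: decompose the kernel into dyadic pieces, prove Fourier-transform decay for each piece using $\Omega\in L^q$, invoke Littlewood--Paley theory, and interpolate (with change of measure, à la Stein--Weiss) between the unweighted estimate that exploits Fourier decay and a weighted estimate with no decay. That is precisely the mechanism that replaces the missing kernel regularity. If you want to salvage a sharp-function-type route instead, you would have to first mollify the kernel (as in (\ref{eq2.5})--(\ref{eq2.6}) of the present paper, or in Hyt\"onen--Roncal--Tapiola), run the H\"ormander-type estimate on the smoothed kernel, and then interpolate away the smoothing; the raw kernel difference will not do it.
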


Fan and Sato \cite{fans} established the weighted  weak type endpoint estimate for $\mu_{\Omega}$.
\begin{theorem}\label{dinglifans}Let $\Omega$ be homogeneous of degree zero, have mean value zero on $S^{n-1}$, and $\Omega\in L^{q}(S^{n-1})$ for some $q\in (1,\,\infty]$. Then for any $w$ with $w^{q'}\in A_1(\mathbb{R}^n)$, there exists a constant $C$ depending on $n$ and $w$, such that
$$\|\mu_{\Omega}(f)\|_{L^{1,\,\infty}(\mathbb{R}^n,\,w)}\leq C\|f\|_{L^1(\mathbb{R}^n,\,w)}.$$
\end{theorem}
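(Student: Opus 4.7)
The plan is to couple a Calder\'on--Zygmund decomposition with the quantitative weighted $L^p$-bounds for $\mu_\Omega$ recorded above, following the standard route for weighted weak-type (1,1) endpoint estimates.

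Since $w^{q'}\in A_1$, Jensen's inequality gives $|Q|^{-1}\int_Q w\leq \bigl(|Q|^{-1}\int_Q w^{q'}\bigr)^{1/q'}\leq C\,\mathrm{ess\,inf}_Q w$, so $w\in A_1$ (in particular $w$ is doubling) and $w\in A_{p/q'}$ for every $p\geq q'$. By the Ding--Fan--Pan theorem recalled just above, $\mu_\Omega$ is then bounded on $L^{p_0}(w)$ for any $p_0>q'$; fix $p_0=2q'$. Apply the unweighted Calder\'on--Zygmund decomposition of $f$ at height $\lambda$ to obtain disjoint dyadic cubes $\{Q_j\}$ with centres $c_j$ satisfying $|Q_j|^{-1}\int_{Q_j}|f|\,dy\sim\lambda$, and write $f=g+\sum_j b_j$ with $\|g\|_\infty\lesssim\lambda$, $\operatorname{supp}b_j\subset Q_j$ and $\int b_j=0$. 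The $A_1$-property of $w$ gives $\sum_j w(Q_j)\lesssim\sum_j(\mathrm{ess\,inf}_{Q_j}w)|Q_j|\leq \lambda^{-1}\|f\|_{L^1(w)}$, and doubling yields $w(E^*)\lesssim\lambda^{-1}\|f\|_{L^1(w)}$ for the exceptional set $E^*=\bigcup_j 2Q_j$. For the good part, a direct computation shows $\|g\|_{L^1(w)}\lesssim\|f\|_{L^1(w)}$; combined with $\|g\|_\infty\lesssim\lambda$ this yields $\|g\|_{L^{p_0}(w)}^{p_0}\lesssim\lambda^{p_0-1}\|f\|_{L^1(w)}$, and Chebyshev together with the $L^{p_0}(w)$-bound on $\mu_\Omega$ gives $w(\{\mu_\Omega g>\lambda/2\})\lesssim\lambda^{-1}\|f\|_{L^1(w)}$.

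The crux of the proof, and the main obstacle, is to show that $\sum_j\int_{(2Q_j)^c}\mu_\Omega b_j(x)w(x)\,dx\lesssim\|f\|_{L^1(w)}$. The difficulty is that $\Omega$ is merely $L^q$ on $S^{n-1}$, so the kernel $K_t(z)=|z|^{1-n}\Omega(z)\chi_{|z|\leq t}$ admits no pointwise H\"ormander-type smoothness estimate; na\"ive cancellation on the difference $K_t(x-y)-K_t(x-c_j)$ fails, since the rough piece $\Omega(x-y)-\Omega(x-c_j)$ produces no $\ell_j/d_j$-gain (with $d_j=|x-c_j|$, $\ell_j=\operatorname{diam}Q_j$). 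The strategy is to trade pointwise smoothness for the $L^2(dt/t^3)$-smoothing intrinsic to the quadratic form of $\mu_\Omega$: writing $\mu_\Omega b_j(x)=\|K_t\ast b_j(x)\|_{L^2(dt/t^3)}$, applying Minkowski's inequality together with the mean-zero of $b_j$, and estimating the kernel-difference norm in $L^2(dt/t^3)$---which produces a genuine $\ell_j$-gain from the narrow transition window $|t-d_j|\lesssim\ell_j$ on which the indicators $\chi_{|x-y|\leq t}$ and $\chi_{|x-c_j|\leq t}$ disagree. A possibly necessary Seeger-type smooth/rough decomposition of $\Omega$ is then combined with H\"older's inequality with exponents $(q,q')$: on each dyadic annulus $|x-c_j|\sim 2^k\ell_j$, H\"older separates the factor $\|\Omega\|_{L^q(S^{n-1})}$ from an integral of $w^{q'}$, and the $A_1$-hypothesis on $w^{q'}$ converts this integral into $(\mathrm{ess\,inf}_{Q_j}w)\,|Q_j|$ times a factor summable in $k$. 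Summing over $j$ yields the required bound $\lesssim\lambda\sum_j w(Q_j)\lesssim\|f\|_{L^1(w)}$, completing the estimate. The pairing of the $L^2(dt/t^3)$-smoothing with the H\"older step is precisely where the condition $w^{q'}\in A_1$ enters decisively, which is what makes it the natural hypothesis for this endpoint bound.
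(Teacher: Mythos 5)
First, a structural remark: the paper does not prove this theorem at all; it is recalled verbatim from Fan and Sato \cite{fans}, so there is no internal proof to compare against. However, your proposal still has to be assessed on its own merits, and there is a genuine gap at the exact step you flag as ``the crux.''

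Your framework (Calder\'on--Zygmund decomposition, $w^{q'}\in A_1\Rightarrow w\in A_1\subset A_{p/q'}$, good part via the Ding--Fan--Pan $L^{p_0}(w)$ bound, exceptional set via doubling) is sound. The problem is the bad-part estimate. After Minkowski and the mean-zero of $b_j$, you must control
\[
\bigl\|K_t(x-y)-K_t(x-c_j)\bigr\|_{L^2(dt/t^3)},\qquad
K_t(z)=\tfrac{\Omega(z)}{|z|^{n-1}}\chi_{\{|z|\le t\}},
\]
and the natural split is into three pieces: (a) the indicator mismatch $\frac{\Omega(x-c_j)}{|x-c_j|^{n-1}}\bigl(\chi_{\{|x-y|\le t\}}-\chi_{\{|x-c_j|\le t\}}\bigr)$; (b) the radial smoothness term $\Omega(x-y)\bigl(|x-y|^{1-n}-|x-c_j|^{1-n}\bigr)\chi_{\{|x-y|\le t\}}$; and (c) the rough angular term $\frac{\Omega(x-y)-\Omega(x-c_j)}{|x-c_j|^{n-1}}\chi_{\{|x-y|\le t\}}$. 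Your ``narrow transition window'' observation and the $(q,q')$-H\"older/$A_1$ argument do handle (a), giving a genuine gain $(\ell_j/d_j)^{1/2}$ that is summable over dyadic annuli; (b) is likewise fine via the Lipschitz bound on $|z|^{1-n}$, giving gain $\ell_j/d_j$. But for (c) the $L^2(dt/t^3)$ smoothing gives you nothing: the $t$-norm of $\chi_{\{|x-y|\le t\}}$ alone is $\sim |x-y|^{-1}\sim d_j^{-1}$, there is no window to exploit, and the remaining factor $|\Omega(x-y)-\Omega(x-c_j)|$ admits no decay in $|y-c_j|/d_j$ when $\Omega$ is merely $L^q$. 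The dyadic-annulus sum with your $(q,q')$-H\"older then produces a constant in $k$, not a geometrically decaying series, and the estimate fails. This is precisely the obstruction that forces Fan and Sato (following Seeger, Christ--Rubio de Francia, Hofmann, etc.) to abandon the pointwise Calder\'on--Zygmund route and pass through a Fourier-analytic machine: a mollification $K_{j,s;t}$ of the kernel in the angular variable, a Littlewood--Paley-type decomposition of the bad part across scales $s$, a Plancherel/van der Corput estimate on one component yielding decay $2^{-\delta s}$, and a brute $L^1$ kernel estimate on the remainder. (Traces of this machinery appear in the present paper's proof of Theorem~\ref{dingli2.1}, where the functions $L_{j,t}$, $K_{j,s;t}$, $M_{j,s;t}$ from Fan--Sato are invoked.) Your sentence ``a possibly necessary Seeger-type smooth/rough decomposition of $\Omega$ is then combined with H\"older'' names the missing ingredient but does not supply it, and without it the decisive gain in $s$ (equivalently in $k$) is absent; the proof as written does not close.
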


Fairly recently, Hu and Qu \cite{huqu} established the following quantitative weighted bounds for $\mu_{\Omega}$.

\begin{theorem}\label{dingli1.2}
Let $\Omega$ be homogeneous of degree zero, have mean value zero on $S^{n-1}$, and $\Omega\in L^{\infty}(S^{n-1})$. Then for $p\in (1,\,\infty)$ and $w\in A_p(\mathbb{R}^n)$,
\begin{eqnarray*}\|\mu_{\Omega}(f)\|_{L^p(\mathbb{R}^n,\,w)}&\lesssim & \|\Omega\|_{L^{\infty}(S^{n-1})}[w]_{A_p}^{\frac{1}{p}}\big([w]_{A_{\infty}}^{(\frac{1}{2}-\frac{1}{p})_+}+
[w^{1-p'}]_{A_{\infty}}^{\frac{1}{p}}\big)\\
&&\times\max\{[w]_{A_{\infty}},\,[w^{1-p'}]_{A_{\infty}}\}
\|f\|_{L^p(\mathbb{R}^n,\,w)},
\end{eqnarray*}
where and in what follows, $(\frac{1}{2}-\frac{1}{p})_+=\max\{\frac{1}{2}-\frac{1}{p},\,0\}.$
\end{theorem}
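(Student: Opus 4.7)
The plan is to derive Theorem \ref{dingli1.2} from a bilinear sparse domination for $\mu_{\Omega}$, paralleling the strategy of Conde-Alonso--Culiuc--Di~Plinio--Ou and Li--P\'erez--Rivera-R\'ios--Roncal for the rough singular integral $T_{\Omega}$, but adapted to the square-function structure of $\mu_{\Omega}$. Concretely, I would first establish that for all bounded, compactly supported $f,g$ and all $r\in (1,\infty)$,
\[
\Big|\int_{\mathbb{R}^n}\mu_{\Omega}(f)(x)\,g(x)\,dx\Big|\lesssim \|\Omega\|_{L^\infty(S^{n-1})}(r')^{1/2}\sup_{\mathcal{S}}\sum_{Q\in \mathcal{S}}\langle |f|\rangle_{Q}\langle |g|\rangle_{Q,\,r}|Q|,
\]
together with a dual bound in which the averages of $f$ and $g$ are swapped. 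Pairing these sparse forms against an $A_p$-weight and optimising in $r$ is designed to produce precisely the combination $[w]_{A_p}^{1/p}\bigl([w]_{A_\infty}^{(1/2-1/p)_+}+[w^{1-p'}]_{A_\infty}^{1/p}\bigr)\max\{[w]_{A_\infty},[w^{1-p'}]_{A_\infty}\}$ in the statement; the square-root factor $(r')^{1/2}$, in place of the $r'$ that appears for $T_{\Omega}$ in \eqref{eq:1.ccdo}, is what drops the exponent from $1/p'$ in \eqref{eq:1.estimate} to $(1/2-1/p)_+$.

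To prove the sparse bound I would use the Lerner--Nazarov iteration, which reduces matters to an $L^r(\mathbb{R}^n)$ control, with $(r')^{1/2}$-dependence, of the grand maximal truncation
\[
\mathcal{M}_{\mu_{\Omega}}f(x)=\sup_{Q\ni x}\sup_{\xi,\eta\in Q}\bigl|\mu_{\Omega}(f\chi_{\mathbb{R}^n\setminus 3Q})(\xi)-\mu_{\Omega}(f\chi_{\mathbb{R}^n\setminus 3Q})(\eta)\bigr|.
\]
For this, decompose the scale parameter dyadically, write $\mu_{\Omega}(f)^{2}=\sum_{j}\int_{2^{j}}^{2^{j+1}}|F_{\Omega,\,t}f|^{2}\,dt/t^{3}$, and split each block further via a smooth annular partition in $|y|/t$. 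The Duoandikoetxea--Rubio de Francia Fourier method yields an $L^2$-decay of order $2^{-\delta j}\|\Omega\|_{L^\infty}$ on each smoothly truncated piece, while the regularity of the smooth truncation supplies off-diagonal estimates at the same speed. Interpolating between these two bounds, with Minkowski's integral inequality applied in the $t$-variable to move the $L^2$-norm inside, then controls $\mathcal{M}_{\mu_{\Omega}}$ with the required quantitative dependence on $r'$.

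Once the sparse domination is available, Theorem \ref{dingli1.2} follows by invoking the weighted estimates for bilinear sparse $(1,r)$-type forms developed in \cite{lpr}: for $p\le 2$ one takes $r$ close to $1$, so the $L^r$-average on the $g$-side is harmless and the constant is absorbed into $[w^{1-p'}]_{A_\infty}^{1/p}$; for $p>2$ the choice $r=1+\varepsilon$ with $\varepsilon\asymp (\log(\mathrm{e}+[w]_{A_\infty}))^{-1}$ converts the $(r')^{1/2}$-dependence into the factor $[w]_{A_\infty}^{1/2-1/p}$. The remaining $\max\{[w]_{A_\infty},[w^{1-p'}]_{A_\infty}\}$ is contributed by the weighted estimate for the $\langle |f|\rangle_{Q}$ side of the sparse form, exactly as in \cite{hyta,lpr}.

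The main obstacle is the $L^{r}$-estimate for $\mathcal{M}_{\mu_{\Omega}}$ with $(r')^{1/2}$ rather than $r'$ dependence. Because $\mu_{\Omega}$ is a square function, the localisation step produces an $\ell^{2}$-sum over scales rather than a supremum, so the Cotlar-type argument that works cleanly for $T_{\Omega}$ must be reorganised: the improvement has to come from applying Plancherel \emph{once}, at the $L^{2}$ endpoint, and then interpolating, instead of summing $L^{r}$-estimates scale by scale. Carrying this through without incurring a logarithmic loss in $j$ is the crux of the entire argument and is precisely what is responsible for the square-function-specific exponent $(1/2-1/p)_{+}$ in the final weighted bound.
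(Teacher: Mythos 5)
The paper itself does not actually prove Theorem \ref{dingli1.2}; it quotes it from Hu--Qu \cite{huqu}, and explicitly remarks that the proof there follows the interpolation method of \cite{hyta} combined with Lerner's pointwise technique \cite{ler3} and ``does not involve the sparse domination of $\mu_{\Omega}$.'' Your proposal to go through a bilinear sparse domination is therefore not the route of the original proof at all, but is in spirit the route the present paper takes to its \emph{sharper} Theorem \ref{dingli1.4}, which replaces $\max\{[w]_{A_\infty},[w^{1-p'}]_{A_\infty}\}$ by $[w]_{A_\infty}^{1/2}$. With this understood, your plan contains a genuine gap that would block the $p>2$ case. You claim a ``dual bound in which the averages of $f$ and $g$ are swapped,'' i.e.\ a sparse form $\sum_Q\langle|f|\rangle_{Q,r}\langle|g|\rangle_Q|Q|$ dominating $\int\mu_\Omega(f)g$. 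For the linear $T_\Omega$ this comes for free by passing to the adjoint $T_\Omega^*=T_{\tilde\Omega}$; for the \emph{sublinear} square function $\mu_\Omega$ there is no adjoint and no such dual sparse form drops out of the Lerner--Nazarov iteration, which only puts the $L^r$-average on $g$ (since the grand maximal truncation $\mathscr{M}_{r,\widetilde\mu_\Omega}$ is built by testing $|\widetilde\mu_\Omega(f\chi_{\mathbb{R}^n\setminus 3Q})|$ on $Q$). The paper's way around this is the one-parameter family of sparse forms (Theorem \ref{dingli1.3}) for $s\in[1,2]$; taking $s=2$ puts $[\mu_\Omega(f)]^2$ against $g$ and lets the $L^{p/2}$--$L^{(p/2)'}$ duality substitute for the missing adjoint when $p>2$. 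Your $s=1$ inequality matches the paper's but is not enough by itself.

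Two further points. First, your sketch of the $L^r$ control of $\mathcal{M}_{\mu_\Omega}$ with $(r')^{1/2}$-dependence (Duoandikoetxea--Rubio de Francia decay, ``Plancherel once, then interpolate'') is too coarse to locate the actual obstruction. The paper does not aim for an $L^r$ bound directly; it proves a weak-$(1,1)$ estimate for the rearrangement maximal operator $M_{\lambda,\widetilde\mu_\Omega}$ with factor $1+\log^{1/2}(1/\lambda)$ (Theorem \ref{dingli2.1}) and then converts via \cite[Lemma 3.3]{ler4}. The $\log^{1/2}$---and hence the exponent $(1/2-1/p)_+$---is wrung out of Lemma \ref{lem2.2}, which sharpens \cite{huqu} by showing $\widetilde\mu_\Omega^l$ is weak-$(1,1)$ with bound $l^{1/2}$ rather than $l$; this in turn splits $U_l(b)=\int_1^2\sum_j|F_j^l b|\,dt$ (bounded via the H\"ormander-type kernel estimate \eqref{eq2.6}) against the $\ell^\infty$-piece $\sup_j\sup_t|K^j_t*\phi_{j-l}*b|\lesssim Mb$, balancing the thresholds $l^{1/2}\lambda$ and $l^{-1/2}\lambda$. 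That Cauchy--Schwarz split is precisely the square-function-specific step your outline leaves implicit. Second, even granting your claimed sparse bound, you still need the weak-$(1,1)$ estimate for the bad part to close Theorem \ref{dingli2.1}, and there the paper leans on the Fan--Sato machinery \cite{fans} (smooth modification $K_{j,s;t}$ of $L_{j,t}$, Fourier decay $2^{-\delta s}$, and the $L^1$ commutator estimate), none of which appears in your outline. These are not cosmetic omissions: without them the $(r')^{1/2}$ cannot be obtained without a spurious $\log$ loss.
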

Hu and Qu \cite{huqu} also proved that if $\Omega\in {\rm Lip}_{\rho}(S^{n-1})$ for some $\rho\in (0,\,1]$, then for bounded function $f$ with compact support, there exists a sparse family of cubes $\mathcal{S}$, such that for almost everywhere $x\in\mathbb{R}^n$,
$$\mu_{\Omega}(f)(x)\lesssim \Big(\sum_{Q\in\mathcal{S}}\langle |f|\rangle_Q^2\chi_{Q}(x)\Big)^{\frac{1}{2}},$$
see also \cite{ler4,lali} for the  sparse dominations of Littlewood-Paley square functions.
The proof of Theorem \ref{dingli1.2} follows from the idea of \cite{hyta}, together with the extension of Lerner's idea in \cite{ler3}, but does not involve the sparse domination of $\mu_{\Omega}$ when $\Omega\in L^{\infty}(S^{n-1})$.  As is well known, sparse dominations for classical operators in harmonic analysis have many applications  and are of independent interests. The purpose of this paper  is to prove that when $\Omega\in L^{\infty}(S^{n-1})$,  the Marcinkiewicz integral operator  enjoys a bilinear sparse domination similar to (\ref{eq:1.ccdo}). We remark that in this paper, we are very much motivated by
Lerner's works \cite{ler4,ler5}.
Our main result can be stated as follows.

\begin{theorem}\label{dingli1.3}
Let $\Omega$ be homogeneous of degree zero, have mean value zero on $S^{n-1}$, and $\Omega\in L^{\infty}(S^{n-1})$. Then for each bounded function $f$ with compact support, there exists a $\frac{1}{2}\frac{1}{3^n}$-sparse family of cubes $\mathcal{S}$, such that for any $r\in (1,\,2]$, $s\in[1,\,2]$, and nonnegative function $g\in L_{{\rm loc}}^r(\mathbb{R}^n)$,
\begin{eqnarray}\label{eq:1.3}
\int_{\mathbb{R}^n}\big(\mu_{\Omega}(f)(x)\big)^sg(x)dx\lesssim r'^{\frac{s}{2}}\|\Omega\|_{L^{\infty}(S^{n-1})}^s\sum_{Q\in\mathcal{S}}\langle |f|\rangle_{Q}^s\langle |g|\rangle_{Q,\,r}|Q|.
\end{eqnarray}
\end{theorem}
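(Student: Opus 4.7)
My plan is to adapt Lerner's scheme \cite{ler4,ler5} for bilinear sparse dominations of rough singular integrals to the $L^2(dt/t^3)$-valued structure of $\mu_\Omega$. The argument proceeds in three steps: a Fourier decomposition of the kernel; a local weak-type estimate for a grand maximal truncation operator; and an iterated Calder\'on--Zygmund stopping-time construction producing the sparse family $\mathcal{S}$.

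First, I would decompose the kernel $K(y)=\Omega(y')|y|^{-(n-1)}$ via a smooth dyadic partition $K_k=K\phi_k$ with $\phi_k$ supported in $\{2^{k-1}\leq|y|\leq 2^{k+1}\}$. By the Duoandikoetxea--Rubio de Francia method, each $K_k$ satisfies $\|K_k\|_{L^1}\lesssim\|\Omega\|_{L^\infty(S^{n-1})}$, a Fourier decay bound $|\widehat{K_k}(\xi)|\lesssim\|\Omega\|_{L^\infty(S^{n-1})}\min\{1,|2^k\xi|^{-\delta}\}$ for some $\delta>0$ (exploiting the mean-zero hypothesis on $\Omega$), and a frequency-Lipschitz estimate at scale $2^{-k}$. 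Since the sharp truncation $\chi_{|y|\leq t}$ affects $K_k$ only in the transition region $t\sim 2^k$, this reduces the analysis of $\mu_\Omega f$ to, essentially, a vector-valued operator $f\mapsto(K_k\ast f)_{k\in\mathbb{Z}}$ taking values in $\ell^2(k)$, plus a boundary term pointwise dominated by $\|\Omega\|_{L^\infty(S^{n-1})}Mf(x)$.

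Second, the analytic heart of the argument is a local weak-type estimate for the grand maximal truncation
$$\mathcal{M}_{\mu_\Omega}^{\#}f(x):=\sup_{Q\ni x}\bigl\|\mu_\Omega(f\chi_{\mathbb{R}^n\setminus 3Q})\bigr\|_{L^\infty(Q)},$$
namely, for every cube $Q$ and every exponent $p\in[2,\infty)$,
$$\bigl\|\mu_\Omega(f\chi_{3Q})\bigr\|_{L^{p,\infty}(Q,\,dx/|Q|)}+\bigl\|\mathcal{M}_{\mu_\Omega}^{\#}(f\chi_{3Q})\bigr\|_{L^{p,\infty}(Q,\,dx/|Q|)}\lesssim p^{1/2}\|\Omega\|_{L^\infty(S^{n-1})}\langle|f|\rangle_{3Q}.$$
I would obtain this by interpolating the $L^1\to L^{1,\infty}$ endpoint (proved by a Calder\'on--Zygmund decomposition exploiting $\|K_k\|_1\lesssim\|\Omega\|_\infty$, in the spirit of \cite{hyta,huqu}) against the $L^2\to L^2$ bound coming from Plancherel and the Fourier decay of $\widehat{K_k}$. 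It is the square-function structure of $\mu_\Omega$ that replaces the factor $p$ appearing in the scalar Calder\'on--Zygmund case \cite{hyta,lpr} by $p^{1/2}$; specialising to $p=sr'$ then delivers the desired $r'^{s/2}$ factor in the final inequality.

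Finally, with these ingredients in place I would carry out Lerner's iterative construction. Fix $Q_0\supset\operatorname{supp} f$ and let $E_{Q_0}\subset Q_0$ be the union of those maximal dyadic subcubes $Q$ on which either $\langle|f|\rangle_{3Q}>A\langle|f|\rangle_{Q_0}$, or the localized weak-$L^2$ quasi-norm of $\mathcal{M}_{\mu_\Omega}^{\#}(f\chi_{3Q_0})$ on $Q$ exceeds $A\|\Omega\|_{L^\infty(S^{n-1})}\langle|f|\rangle_{Q_0}$. The estimate above (taken with $p=2$) together with the weak-$(1,1)$ bound for $M$ permits the choice of $A$ so that $|E_{Q_0}|\leq\bigl(1-\frac{1}{2\cdot 3^n}\bigr)|Q_0|$. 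On the good set $Q_0\setminus E_{Q_0}$, H\"older's inequality combined with the local $L^{sr'}$-form of the weak-type estimate controls $\int_{Q_0\setminus E_{Q_0}}(\mu_\Omega f)^s g$ by $r'^{s/2}\|\Omega\|_{L^\infty(S^{n-1})}^s\langle|f|\rangle_{Q_0}^s\langle g\rangle_{Q_0,r}|Q_0|$; iterating on each component cube of $E_{Q_0}$ produces the $\frac{1}{2\cdot 3^n}$-sparse family $\mathcal{S}$ satisfying (\ref{eq:1.3}). The main obstacle will be establishing the weak-type estimate with the sharp $p^{1/2}$ dependence: one must keep the vector-valued Fourier analysis of the $\widehat{K_k}$ uniform in $k$, and simultaneously control the boundary terms generated by the sharp $t$-truncation without forfeiting the square-root improvement on which the whole argument rests.
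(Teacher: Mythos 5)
Your plan has the right overall architecture — kernel approximation, a weak-type bound for a maximal truncation with $p^{1/2}$ dependence, then Lerner's stopping-time iteration — and you correctly intuit that the $p^{1/2}$ (rather than $p$) should come from the square-function structure. But the mechanism you propose for the key estimate is not correct, and the hardest parts of the argument are left untouched.

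The central claim you make — that the local weak-type estimate
$\|\mathcal{M}_{\mu_\Omega}^{\#}(f\chi_{3Q})\|_{L^{p,\infty}(Q,\,dx/|Q|)}\lesssim p^{1/2}\langle|f|\rangle_{3Q}$
for $p\in[2,\infty)$ can be obtained ``by interpolating the $L^1\to L^{1,\infty}$ endpoint against the $L^2\to L^2$ bound'' — does not hold up. Interpolation between weak $(1,1)$ and $L^2\to L^2$ only covers exponents $1<p<2$, and even there it would produce a blow-up of the form $p'\sim(p-1)^{-1}$ near $p=1$, not $p^{1/2}$ as $p\to\infty$. No version of Marcinkiewicz interpolation from those two endpoints reaches $p\in[2,\infty)$, let alone with a $p^{1/2}$ constant. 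The correct route (the one the paper takes) runs through two nonstandard ingredients that your plan omits. First, one smooths the truncations, defining $\widetilde{\mu}_\Omega^l$ via convolution with $\phi_{j-l}$ at scale $j-l$, and proves a \emph{refined} weak $(1,1)$ estimate $\|\widetilde{\mu}_\Omega^l\|_{L^1\to L^{1,\infty}}\lesssim l^{1/2}\|\Omega\|_\infty$ (Lemma 2.2). The $l^{1/2}$ — this is exactly where the square-function structure is exploited — comes from a pointwise Cauchy--Schwarz splitting of the square function applied to the bad part $b$ of the Calder\'on--Zygmund decomposition: $\widetilde{\mu}_\Omega^l(b)^2\lesssim\bigl(\sup_{j,t}|K_t^j*\phi_{j-l}*b|\bigr)\cdot U_l(b)$, where $U_l(b)$ obeys an $L^1$-type estimate with constant $l$ and the supremum is pointwise dominated by $Mb$; splitting the level set at thresholds $l^{\pm1/2}\lambda$ yields the $l^{1/2}$ weak $(1,1)$ bound. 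No interpolation argument produces this. Second, closing the loop from $\widetilde{\mu}_\Omega^l$ back to $\widetilde{\mu}_\Omega$ — that is, proving the $\log^{1/2}(1/\lambda)$ bound for Lerner's operator $M_{\lambda,\widetilde{\mu}_\Omega}$ (Theorem 2.1) — does not follow just from the $L^2$ decay (2.5). The bad part requires a Fan--Sato style analysis: auxiliary kernels $L_{j,t}$, mollified variants $K_{j,s;t}$, and companion kernels $M_{j,s;t}$, together with delicate level-set estimates for the resulting maximal operator $\mathcal{M}_\lambda$ and a second optimization parameter $m$. Your proposal mentions none of this. You flag the $p^{1/2}$ estimate as ``the main obstacle,'' which is honest, but the plan as written provides no viable path to it. The third, iterative stopping-time step is correctly sketched, but without the weak-type input it does not close.
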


As   applications of Theorem \ref{dingli1.3}, we obtain the following quantitative weighted bounds for $\mu_{\Omega}$.
 \begin{theorem}\label{dingli1.4} Let $\Omega$ be homogeneous of degree zero,   have mean value zero on $S^{n-1}$ and $\Omega\in L^{\infty}(S^{n-1})$.
Then
\begin{itemize}
\item[\rm (i)]for $p\in (1,\,\infty)$ and $w\in A_{p}(\mathbb{R}^n)$,
\begin{eqnarray}\label{eq:1.4}\|\mu_{\Omega}(f)\|_{L^p(\mathbb{R}^n,\,w)}&\lesssim & \|\Omega\|_{L^{\infty}(S^{n-1})}[w]_{A_p}^{\frac{1}{p}}\big([w]_{A_{\infty}}^{(\frac{1}{2}-\frac{1}{p})_+}+
[w^{1-p'}]_{A_{\infty}}^{\frac{1}{p}}\big)\\
&&\times[w]_{A_{\infty}}^{\frac{1}{2}}
\|f\|_{L^p(\mathbb{R}^n,\,w)}.\nonumber
\end{eqnarray}
\item[\rm (ii)] For $w\in A_1(\mathbb{R}^n)$,
\begin{eqnarray}\label{eq:1.5}&&\|\mu_{\Omega}(f)\|_{L^{1,\,\infty}(\mathbb{R}^n)}\lesssim \|\Omega\|_{L^{\infty}(S^{n-1})}[w]_{A_1}[w]_{A_{\infty}}^{\frac{1}{2}}\log ({\rm e}+[w]_{A_{\infty}})\|f\|_{L^1(\mathbb{R}^n,\,w)}.\end{eqnarray}
\end{itemize}
\end{theorem}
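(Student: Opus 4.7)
The plan is to deduce both parts of Theorem~\ref{dingli1.4} from Theorem~\ref{dingli1.3} by duality, applying the sparse domination with $s=1$ throughout and, when $p>2$, additionally with $s=2$ in order to exploit the improved $(r')^{s/2}$ prefactor.

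For part~(i), the first step is to dualize: write $\|\mu_\Omega(f)\|_{L^p(\mathbb{R}^n,w)} = \sup\int\mu_\Omega(f)h\,dx$ over nonnegative $h$ with $\|h\|_{L^{p'}(\sigma)}\le 1$, where $\sigma = w^{1-p'}$. Theorem~\ref{dingli1.3} with $s=1$ reduces matters to controlling the bilinear sparse form $\sum_{Q\in\mathcal{S}}\langle|f|\rangle_Q\langle h\rangle_{Q,r}|Q|$, multiplied by $(r')^{1/2}\|\Omega\|_{L^{\infty}(S^{n-1})}$. The decisive choice is $r$ with $r-1\sim 1/[w]_{A_{\infty}}$, which yields $(r')^{1/2}\lesssim[w]_{A_{\infty}}^{1/2}$ and, via the Hyt\"onen--P\'erez sharp reverse H\"older inequality, allows the $L^r$ average to be replaced by the ordinary average up to a controlled power of $\langle w\rangle_Q$. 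H\"older's inequality with the weights $w$ and $\sigma$, combined with the $A_p$ relation $\langle w\rangle_Q\langle\sigma\rangle_Q^{p-1}\le[w]_{A_p}$, then estimates the sparse form by $[w]_{A_p}^{1/p}[w^{1-p'}]_{A_{\infty}}^{1/p}\|f\|_{L^p(w)}\|h\|_{L^{p'}(\sigma)}$, producing the second summand in \eqref{eq:1.4}. For $p>2$ the first summand is obtained in parallel by applying Theorem~\ref{dingli1.3} with $s=2$, identifying $\|\mu_\Omega(f)\|_{L^p(w)}^2 = \|\mu_\Omega(f)^2\|_{L^{p/2}(w)}$, using $\langle|f|\rangle_Q^{2}\le\langle|f|^{2}\rangle_Q$, and running the analogous computation with exponent $p/2$; after taking square roots the prefactor $r'$ also becomes $(r')^{1/2}\lesssim[w]_{A_{\infty}}^{1/2}$, and the $A_{p/2}$-type estimate produces the factor $[w]_{A_{\infty}}^{1/2-1/p}$. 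Adding the two contributions yields \eqref{eq:1.4}.

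For part~(ii) the plan is to follow the endpoint strategy of Li--P\'erez--Rivera-Rios--Roncal~\cite{lpr} for $T_\Omega$. A Calder\'on--Zygmund decomposition of $f$ at level $\lambda$ with respect to the $A_1$ weight $w$ splits $f$ into a good part, controlled by the $L^2(w)$ bound from part~(i), and a bad part supported on disjoint stopping cubes $\{Q_j\}$. Theorem~\ref{dingli1.3} with $s=1$ supplies a sparse domination of $\mu_\Omega$ on the complement of $\bigcup_j Q_j$; the $A_1$ property $Mw\lesssim[w]_{A_1}w$ together with the sparse Carleson structure then converts this into the weak-$L^1(w)$ inequality~\eqref{eq:1.5}. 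The factor $[w]_{A_{\infty}}^{1/2}$ comes as before from $(r')^{1/2}$ with $r'\sim[w]_{A_{\infty}}$, while the $\log(e+[w]_{A_{\infty}})$ factor arises from the scale-by-scale accumulation in the sparse form, exactly as in \cite{lpr}.

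The principal obstacle will be the careful bookkeeping in the sparse-form estimate: tuning the sharp reverse H\"older exponent to the parameter $r$ of Theorem~\ref{dingli1.3} so that the $(r')^{s/2}$ prefactor combines with the $A_p$--$A_{\infty}$ bound to reproduce precisely the exponents in \eqref{eq:1.4}. In part~(ii) the extra delicate point is preserving the exact $\log(e+[w]_{A_{\infty}})$ factor through the endpoint argument, which hinges on controlling the tails arising from the Calder\'on--Zygmund decomposition.
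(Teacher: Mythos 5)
Your overall strategy of feeding the sparse domination of Theorem~\ref{dingli1.3} into sharp weighted bounds for sparse forms is correct in spirit, and part~(ii) is essentially the paper's argument (the paper invokes Lemma~\ref{lem3.4}, a black-box weak-type estimate for sparse-dominated operators from Hu--Lai--Xue/\cite{lpr}, rather than re-doing the Calder\'on--Zygmund decomposition, but the two are the same mechanism). Part~(i), however, has two genuine gaps.

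First, for $p\in(1,2]$ you fix $s=1$ and dualize. But the paper obtains the clean factor $[w]_{A_p}^{1/p}[w^{1-p'}]_{A_\infty}^{1/p}$ precisely by \emph{avoiding} duality and exploiting the free parameter $s$ in Theorem~\ref{dingli1.3}: it sets $g=w$, $s=p$ and $r=\tau_w=1+\tfrac{1}{2^{11+n}[w]_{A_\infty}}$, uses reverse H\"older (Lemma~\ref{lem3.3}) to replace $\langle w\rangle_{Q,\tau_w}$ by $\langle w\rangle_Q$, and is then left with $\|\mathcal A_{\mathcal S}^p f\|_{L^p(w)}^p$, to which Lemma~\ref{lem3.2}(i) applies with $r=p$, giving the $A_\infty$ exponent $(1/p-1/p)_+=0$. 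With $s=1$ as you propose, the natural sparse-operator bound (Lemma~\ref{lem3.2}(i) with $r=1$, or Lemma~\ref{lem3.2}(ii) with $r=1$) carries an extra factor $[w]_{A_\infty}^{(1-1/p)_+}=[w]_{A_\infty}^{1/p'}$ which does not appear in \eqref{eq:1.4}; your ``reverse H\"older on $\langle h\rangle_{Q,r}$'' step is also not well-defined, since $h$ is an arbitrary dual test function, not a weight. So $s=1$ loses information that $s=p$ retains.

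Second, for $p>2$ your step $\langle|f|\rangle_Q^2\le\langle|f|^2\rangle_Q$ followed by ``running the analogous computation with exponent $p/2$'' would require $w\in A_{p/2}(\mathbb R^n)$, which is strictly stronger than the hypothesis $w\in A_p(\mathbb R^n)$. The point is exactly that one must \emph{not} pass to $\langle|f|^2\rangle_Q$: the paper keeps $\langle|f|\rangle_Q^2$ and appeals to Lemma~\ref{lem3.2}(ii) (the Lacey--Li estimate), which bounds forms of the type $\sum_Q\langle |f|w^{1-p'}\rangle_Q^{2}\langle|g|w\rangle_Q|Q|$ directly in terms of $[w]_{A_p}^{2/p}\bigl([w]_{A_\infty}^{1-2/p}+[w^{1-p'}]_{A_\infty}^{2/p}\bigr)$ for $p>2$; this is an $A_p$-estimate for a quadratic sparse form, not an $A_{p/2}$-estimate. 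In addition, absorbing the $L^r$ average $\langle|g|w\rangle_{Q,r}$ requires the H\"older/reverse-H\"older decomposition and the auxiliary weighted maximal function $M_{sr}^{\mathscr D,w}$ from the paper's proof, which your outline does not account for. You would need to replace the $\langle|f|\rangle_Q^2\le\langle|f|^2\rangle_Q$ reduction with the Lacey--Li type estimate to make the argument close.
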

\begin{remark} Estimate (\ref{eq:1.4}) improves the conclusion of Theorem \ref{dingli1.2}.  and estimate (\ref{eq:1.5}) is a quantitative version of Theorem \ref{dinglifans} when $\Omega\in L^{\infty}(S^{n-1})$.
\end{remark}
We also have the following weighted norm inequality of Coifman-Fefferman type for $\mu_{\Omega}$.
\begin{theorem}\label{dingli1.5} Let $\Omega$ be homogeneous of degree zero,   have mean value zero on $S^{n-1}$ and $\Omega\in L^{\infty}(S^{n-1})$.
Then for $w\in A_{\infty}(\mathbb{R}^n)$,
$$\|\mu_{\Omega}(f)\|_{L^p(\mathbb{R}^n,\,w)}\lesssim\|\Omega\|_{L^{\infty}(S^{n-1})}\Bigg\{ \begin{array}{ll}\displaystyle
[w]_{A_{\infty}}^{\frac{1}{p}+\frac{1}{2}}
\|Mf\|_{L^p(\mathbb{R}^n,\,w)},\,&\hbox{if}\,\,p\in [1,\,2],\\
\,[w]_{A_{\infty}}\|Mf\|_{L^p(\mathbb{R}^n,\,w)},\,&\hbox{if}\,\,p\in (2,\infty),\end{array}$$
provided that $f$ is a bounded function with compact support.
\end{theorem}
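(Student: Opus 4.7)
The plan is to invoke Theorem \ref{dingli1.3} with carefully chosen $s$ and $r$, and to exploit the sharp reverse Hölder inequality for $A_\infty$ weights. For a fixed $w \in A_\infty$ I set $r_w := 1 + 1/(c_n[w]_{A_\infty})$, so that the sharp reverse Hölder inequality gives $\langle w \rangle_{Q,\,r_w} \leq 2 \langle w \rangle_Q$ for every cube $Q$ together with $r_w' \lesssim [w]_{A_\infty}$. I also use the quantitative $A_\infty$ property $w(Q) \lesssim [w]_{A_\infty} w(E)$ whenever $E \subset Q$ is measurable with $|E| \geq \tfrac{1}{2\cdot 3^n}|Q|$; this applies in particular to the sets $E_Q$ witnessing the sparseness of the family $\mathcal{S}$ from Theorem \ref{dingli1.3}.

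For the case $p \in [1,2]$, apply Theorem \ref{dingli1.3} with $s = p$, $r = r_w$, and $g = w$:
\begin{align*}
\int_{\mathbb{R}^n} \big(\mu_\Omega f\big)^p w\,dx
&\lesssim r_w'^{\,p/2}\,\|\Omega\|_{L^\infty(S^{n-1})}^p \sum_{Q \in \mathcal{S}} \langle |f| \rangle_Q^p \langle w \rangle_{Q,\,r_w} |Q| \\
&\lesssim [w]_{A_\infty}^{p/2}\,\|\Omega\|_{L^\infty(S^{n-1})}^p \sum_{Q} \langle |f| \rangle_Q^p w(Q).
\end{align*}
Since $\langle |f| \rangle_Q \leq \inf_{y \in E_Q} Mf(y)$ and the sets $E_Q$ are pairwise disjoint, combining this with $w(Q) \lesssim [w]_{A_\infty} w(E_Q)$ yields
$$\sum_Q \langle |f| \rangle_Q^p w(Q) \lesssim [w]_{A_\infty} \int_{\mathbb{R}^n} (Mf)^p w\,dx,$$
which, after taking $p$-th roots, produces the claimed $\|\mu_\Omega f\|_{L^p(w)} \lesssim \|\Omega\|_{L^\infty(S^{n-1})} [w]_{A_\infty}^{1/p + 1/2} \|Mf\|_{L^p(w)}$.

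For $p \in (2,\infty)$, the estimate just obtained at $p = 2$ reads $\|\mu_\Omega f\|_{L^2(v)} \lesssim \|\Omega\|_{L^\infty(S^{n-1})} [v]_{A_\infty} \|Mf\|_{L^2(v)}$ for every $v \in A_\infty$. I then plan to apply an $A_\infty$ Coifman--Fefferman type extrapolation (which preserves the $A_\infty$ exponent, see e.g.\ \cite{lpr}) to transfer this estimate to $L^p(w)$ for $p > 2$, obtaining $\|\mu_\Omega f\|_{L^p(w)} \lesssim \|\Omega\|_{L^\infty(S^{n-1})} [w]_{A_\infty} \|Mf\|_{L^p(w)}$. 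A self-contained alternative is to test Theorem \ref{dingli1.3} (with $s = 2$) against $g = (\mu_\Omega f)^{p-2} w$, apply Hölder with exponents $(p/2,(p/2)')$ in the weighted pairing between $L^{p/2}(w)$ and $L^{(p/2)'}(w^{1-(p/2)'})$, and absorb the factor $\|\mu_\Omega f\|_{L^p(w)}^{p-2}$ into the left-hand side. I expect the main obstacle to lie precisely in this latter route: it demands a bound for the auxiliary maximal operator $M_r$ on $L^{(p/2)'}(w^{1-(p/2)'})$ with constant proportional to $[w]_{A_\infty}$, which does not follow directly from the sole assumption $w \in A_\infty$, so the cleanest route appears to be the extrapolation argument.
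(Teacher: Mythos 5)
Your strategy for $p\in[1,2]$ is the right one (apply Theorem \ref{dingli1.3} with $s=p$, $g=w$, $r=r_w$ the sharp reverse H\"older exponent, and then exploit sparseness and the maximal function), and this is exactly the route the paper takes, by deferring to the argument of Theorem 1.1 in \cite{lpr}. However, there is a genuine error in the step where you dispatch the resulting sum. You assert the ``quantitative $A_\infty$ property''
$$w(Q)\lesssim [w]_{A_\infty}\,w(E_Q)\qquad\text{whenever }|E_Q|\ge\tfrac{1}{2\cdot 3^n}|Q|,$$
but this is false. Running the sharp reverse H\"older inequality with $r_w=1+\tfrac{1}{c_n[w]_{A_\infty}}$ gives only
$$w(Q\setminus E_Q)\le 2\Big(\tfrac{|Q\setminus E_Q|}{|Q|}\Big)^{1/r_w'}w(Q)\le 2(1-\eta)^{1/r_w'}w(Q),$$
and since $r_w'\asymp[w]_{A_\infty}$ is typically large, the factor $(1-\eta)^{1/r_w'}$ tends to $1$; one cannot conclude $w(E_Q)\gtrsim[w]_{A_\infty}^{-1}w(Q)$. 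Indeed no $A_\infty$-quantitative lower bound of this pointwise form is available. The correct mechanism, which does produce the factor $[w]_{A_\infty}$ you are after, is the Carleson packing condition for sparse families: for $w\in A_\infty$ (Fujii--Wilson) and any $R$,
$$\sum_{Q\in\mathcal{S},\,Q\subset R}w(Q)\le\eta^{-1}\sum_{Q\subset R}\int_{E_Q}\tfrac{w(Q)}{|Q|}\,dx\le\eta^{-1}\int_R M(w\chi_R)\le\eta^{-1}[w]_{A_\infty}w(R).$$
Combining this with $\langle|f|\rangle_Q\le\inf_Q Mf\le\langle Mf\rangle_Q^w$ and the Carleson embedding theorem gives
$$\sum_{Q\in\mathcal{S}}\langle|f|\rangle_Q^p\,w(Q)\le\sum_{Q}\big(\langle Mf\rangle_Q^w\big)^p w(Q)\lesssim[w]_{A_\infty}\|Mf\|_{L^p(w)}^p,$$
which recovers your target. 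So the conclusion you want is true and your overall architecture is sound, but the justification via a fictitious pointwise $A_\infty$ estimate must be replaced by the Carleson embedding argument.

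For $p\in(2,\infty)$ you essentially defer: neither of your proposed routes is carried out, and you acknowledge not being able to close the second one. The paper handles this case by the duality form you sketch: it applies Theorem \ref{dingli1.3} with $s=2$, bounds $\int(\mu_\Omega f)^2 gw\,dx$ by $[w]_{A_\infty}^2\|Mf\|_{L^p(w)}^2\|g\|_{L^{(p/2)'}(w)}$ following the mechanism of Theorem 1.1 in \cite{lpr} (a reverse H\"older splitting of $\langle|g|w\rangle_{Q,r}$ into a $w$-weighted average of $g$ times $\langle w\rangle_Q$, followed by the same Carleson embedding), and then dualizes in $L^{p/2}(w)$. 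Your concern that one needs the auxiliary maximal operator bounded with constant $[w]_{A_\infty}$ is misplaced: the maximal operator that actually appears is $w$-weighted and dyadic, hence bounded on $L^{(p/2)'}(w)$ with a constant depending only on $n$ and $p$; the single factor $[w]_{A_\infty}$ beyond the $r_w'\lesssim[w]_{A_\infty}$ from the sparse estimate again comes from the Carleson packing condition, not from the maximal operator. As for the extrapolation alternative, $A_\infty$-extrapolation does not, in general, preserve the quantitative exponent in $[w]_{A_\infty}$, so it cannot be invoked as a black box here without further justification.
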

We make some conventions. In what follows, $C$ always denotes a
positive constant that is independent of the main parameters
involved but whose value may differ from line to line. We use the
symbol $A\lesssim B$ to denote that there exists a positive constant
$C$ such that $A\le CB$.   For a set $E\subset\mathbb{R}^n$,
$\chi_E$ denotes its characteristic function. For a cube $Q$, we use $\ell(Q)$ to denote the side lenth of $Q$.

\section{Proof of Theorem \ref{dingli1.3}}
We begin by recalling some definitions and a basic lemma from \cite{lernaza}. Given
a cube $Q_0$, we denote by $\mathcal{D}(Q_0)$ the   set of dyadic cubes with respect to $Q_0$, that is, the cubes from $\mathcal{D}(Q_0)$ are formed by repeating subdivision of $Q_0$ and each of descendants into $2^n$ congruent subcubes.

Let $\mathscr{D}$ be a  collection of cubes in $\mathbb{R}^n$. We say  that $\mathscr{D}$  a dyadic lattice if
\begin{itemize}
\item[\rm (i)]if $Q\in\mathscr{D}$, then $\mathcal{D}(Q)\subset \mathscr{D}$;
\item[\rm (ii)] every two cubes $Q',\, Q''\in\mathscr{D}$ have a common ancestor,  that is, there
exists $Q\in\mathscr{D}$ such that $Q¡ä,\,Q¡ä¡ä \in \mathcal{D}(Q)$;
\item[\rm (iii)] for every compact set $K\subset \mathbb{R}^n$,  there exists a cube $Q\in\mathscr{D}$
containing $K$.
\end{itemize}
\begin{lemma}\label{lem2.1} {\rm (The Three Lattices Theorem)} For every dyadic lattice
$\mathscr{D}$, there exist $3^n$ dyadic lattices $\mathscr{D}_1$,\,\dots,\,$\mathscr{D}_{3^n}$  such that
$$\{3Q : Q \in\mathscr{D}\} = \cup_{j=1}^{3n}\mathscr{D}_j$$
and for every cube $Q \in \mathscr{D}$ and $j = 1,\,\dots,\, 3^n$, there exists a unique cube
$R \in  \mathscr{D}_j$ of side length $\ell(R)=3\ell(Q)$ containing $Q$.
\end{lemma}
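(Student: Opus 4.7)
The plan is to construct the $3^n$ dyadic lattices explicitly, partitioning the collection $\{3Q:Q\in\mathscr{D}\}$ via a residue-class labeling modulo $3$ of cube positions. At each scale the triples $\{3Q:\ell(Q)=2^k\}$ form a covering of $\mathbb{R}^n$ with $3^n$-fold overlap, and one must select $3^n$ consistent sub-tilings from these coverings so that the resulting collections are closed under dyadic subdivision.

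I would first use axioms (ii) and (iii) to produce a distinguished nested chain $(Q^{(k)})_{k\in\mathbb{Z}}\subset\mathscr{D}$ with $Q^{(k)}\subset Q^{(k+1)}$, $\ell(Q^{(k+1)})=2\ell(Q^{(k)})$, and $\bigcup_k Q^{(k)}=\mathbb{R}^n$. After an affine normalization assume $\ell(Q^{(k)})=2^k$, so that every $Q\in\mathscr{D}$ of side $2^k$ is labeled by a unique position $m_Q\in\mathbb{Z}^n$ and at generation $k$ the lattice $\mathscr{D}$ becomes the standard tiling $\{2^k([0,1]^n+m):m\in\mathbb{Z}^n\}$. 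For $a\in\{0,1,2\}^n$ put $\mathcal{G}_k^a:=\{Q\in\mathscr{D}:\ell(Q)=2^k,\ m_Q\equiv a\pmod 3\}$; a direct geometric check shows that $\{3Q:Q\in\mathcal{G}_k^a\}$ is a tiling of $\mathbb{R}^n$. The heart of the argument is the following cross-scale compatibility, verified first in dimension one and then lifted coordinate-wise: the $2^n$ cubes obtained by dyadically subdividing the triple $3R$ of any $R\in\mathcal{G}_{k+1}^b$ are precisely the triples $3Q$ of the $2^n$ cubes $Q$ lying inside $3R$, and these $Q$ all satisfy $m_Q\equiv\sigma(b)\pmod 3$, where $\sigma:\{0,1,2\}^n\to\{0,1,2\}^n$ is the coordinate-wise involution sending $0\leftrightarrow 2$ and fixing $1$.

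Given this compatibility, for each $\alpha\in\{0,1,2\}^n$ I define
$$\mathscr{D}_\alpha\;:=\;\bigcup_{k\in\mathbb{Z}}\{3Q:Q\in\mathcal{G}_k^{\sigma^k(\alpha)}\},$$
where $\sigma^k$ is the $k$-th iterate of $\sigma$ (so $\sigma^k=\mathrm{id}$ for even $k$ and $\sigma^k=\sigma$ for odd $k$). The cross-scale rule immediately yields axiom (i) of a dyadic lattice for $\mathscr{D}_\alpha$, while axioms (ii) and (iii) follow from the corresponding properties of $\mathscr{D}$ by taking ancestors at sufficiently high generation and iteratively selecting the parent in $\mathscr{D}_\alpha$ via the cross-scale rule. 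Since the map $\alpha\mapsto\sigma^k(\alpha)$ is a bijection of $\{0,1,2\}^n$ for each fixed $k$, we have $\bigcup_\alpha\mathscr{D}_\alpha=\{3Q:Q\in\mathscr{D}\}$. For the uniqueness clause, given $Q\in\mathscr{D}$ of side $2^k$ and $\alpha\in\{0,1,2\}^n$, every cube $R\in\mathscr{D}_\alpha$ of side $3\ell(Q)$ containing $Q$ is of the form $R=3Q'$ with $m_{Q'}\in\{m_Q-1,m_Q,m_Q+1\}^n$ and $m_{Q'}\equiv\sigma^k(\alpha)\pmod 3$; since three consecutive integers realize each residue class modulo $3$ exactly once, such a $Q'$ exists and is unique.

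The main obstacle is pinning down the correct involution $\sigma$ and verifying the cross-scale rule: a short computation in one dimension followed by a coordinate-wise lift, but one where getting the indexing right is the only subtle point. Once that rule is fixed, verification of the three lattice axioms for each $\mathscr{D}_\alpha$ and of the union and uniqueness statements is routine bookkeeping.
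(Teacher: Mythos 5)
First, a point of comparison: the paper does not actually prove this lemma --- it is quoted verbatim from Lerner--Nazarov \cite{lernaza} as a known tool --- so your proposal can only be judged on its own terms. Its architecture (residue classes modulo $3$ at each scale, a cross-scale involution on residues, and an alternating selection rule) is indeed the standard construction, and the uniqueness argument via ``three consecutive integers realize each residue class exactly once'' is correct.

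There is, however, a genuine error in the normalization step, and it propagates into the key formula. You cannot ``assume \dots\ at generation $k$ the lattice $\mathscr{D}$ becomes the standard tiling $\{2^k([0,1]^n+m)\}$'' for all $k$ simultaneously. The generation-$k$ layer of a dyadic lattice is a translated grid, and in passing from generation $k$ to generation $k+1$ each coordinate carries a bit $\epsilon_k\in\{0,1\}$ recording which of the two admissible coarsenings occurs; these bits cannot all be normalized to $0$. Indeed, if every layer were the standard grid, then $[0,1]^n$ and $-[0,1]^n$ would have no common ancestor, so the configuration you reduce to is precisely the one that violates axiom (ii) and is \emph{not} a dyadic lattice. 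This matters because the cross-scale residue map depends on $\epsilon_k$: a short $1$-dimensional computation shows that when adjacent generations share their offset the map is $b\mapsto 2-b$ (your $\sigma$), but in the shifted case it is $b\mapsto -b$, a different involution. Hence $\mathscr{D}_\alpha=\bigcup_k\{3Q:\,m_Q\equiv\sigma^k(\alpha)\}$ with a single $\sigma$ alternating with the parity of $k$ does not satisfy axiom (i) for a general $\mathscr{D}$. The gap is repairable: either replace $\sigma^k$ by the composition $\rho_{k-1}\circ\cdots\circ\rho_0$ of the generation-dependent involutions $\rho_j(b)=2-b+\epsilon_j$ (still a bijection of $\{0,1,2\}^n$ for each $k$, so your union and uniqueness arguments survive verbatim), or re-anchor the position labels generation by generation so that the rule becomes uniformly $\sigma$. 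A secondary point: for axioms (ii)--(iii) of $\mathscr{D}_\alpha$, iterating the parent map from a single cube yields an increasing chain whose union need not be all of $\mathbb{R}^n$, so ``taking ancestors at sufficiently high generation'' does not by itself produce common ancestors; the clean route is to use the existence half of your uniqueness clause (each $Q\in\mathscr{D}$ lies in a unique $R_Q\in\mathscr{D}_\alpha$ with $\ell(R_Q)=3\ell(Q)$ and $Q\subset R_Q$), check in one step that $Q'\in\mathcal{D}(Q)$ forces $R_{Q'}\in\mathcal{D}(R_Q)$, and then import (ii)--(iii) from $\mathscr{D}$.
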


For $t\in [1,\,2]$ and $j\in\mathbb{Z}$, set
\begin{eqnarray}\label{eq:kernel-t-j}
K^j_t(x)=\frac{1}{2^j}\frac{\Omega(x)}{|x|^{n-1}}\chi_{\{2^{j-1}t<|x|\leq 2^jt\}}(x).\end{eqnarray}
Let
\begin{eqnarray}\label{eq1.tilde}\widetilde{\mu}_{\Omega}(f)(x)=\Big(\int^2_1\sum_{j\in\mathbb{Z}}\big|F_{j}f(x,\,t)\big|^2dt\Big)^{1/2},\end{eqnarray}with
$$F_jf(x,\,t)=\int_{\mathbb{R}^n}K^j_t(x-y)f(y)dy.$$
A trivial computation leads to that
\begin{eqnarray}\label{eq:equi}
\mu_{\Omega}(f)(x)\approx \widetilde{\mu}_{\Omega}(f)(x).
\end{eqnarray}
Observe that
\begin{eqnarray}\label{eq2.4}\widetilde{\mu}_{\Omega}(f)(x)=\Big(\int^{\infty}_0|\widetilde{F}_t(f)(x)|^2\frac{dt}{t^3}\Big)^{\frac{1}{2}},\end{eqnarray}
with
$$\widetilde{F}_t(f)(x)=\int_{t/2<|x-y|\leq t}\frac{\Omega(x-y)}{|x-y|^{n-1}}f(y)dy.
$$

 Let $\phi\in
C^{\infty}_0(\mathbb{R}^n)$ be a nonnegative function such that
$\int_{\mathbb{R}^n}\phi(x)dx=1$, ${\rm
supp}\,\phi\subset\{x:\,|x|\leq 1/4\}$. For $l\in \mathbb{Z}$, let
$\phi_l(y)=2^{-nl}\phi(2^{-l}y)$.
Define the operator $\widetilde{\mu}_{\Omega}^l$ by
\begin{eqnarray*}\widetilde{\mu}_{\Omega}^l(f)(x)=\Big(\int^2_1\sum_{j\in\mathbb{Z}}\big|F_{j}^lf(x,\,t)\big|^2
dt\Big)^{1/2},\end{eqnarray*}where
$$F_{j}^lf(x,\,t)=\int_{\mathbb{R}^n}K^j_t*\phi_{j-l}(x-y)f(y)\,dy.
$$
It was proved in \cite{huqu}
that for some $\kappa\in (0,\,1)$.
\begin{eqnarray}\label{eq2.5}
\|\widetilde{\mu}_{\Omega}(f)-\widetilde{\mu}_{\Omega}^{l}(f)\|_{L^2(\mathbb{R}^n)}^2
\lesssim2^{-2\kappa l}\|\Omega\|_{L^{\infty}(S^{n-1})}\|f\|_{L^2(\mathbb{R}^n)}^2.
\end{eqnarray}
Also, for
$l,\,k\in\mathbb{N}$, $R>0$ and  $y\in \mathbb{R}^n$ with $|y|<R/4$,
\begin{eqnarray}\label{eq2.6}&&\sum_{j\in\mathbb{Z}}\sup_{2^kR<|x|\leq 2^{k+1}R}\sup_{t\in [1,\,2]}\big|K^j_{t}*\phi_{j-l}(x+y)-K^j_{t}*\phi_{j-l}(x)\big|\\
&&\quad\lesssim \|\Omega\|_{L^{\infty}(S^{n-1})}\frac{1}{(2^kR)^n}\min\{1,\,2^l\frac{|y|}{2^kR}\},\nonumber\end{eqnarray}
see \cite[Lemma 2.2]{huqu}.
\begin{lemma}\label{lem2.2} Let $\Omega$ be homogeneous of degree zero and have mean value zero.
Suppose that $\Omega\in L^{\infty}(S^{n-1})$. Then ${\widetilde{\mu}_{\Omega}^l}$ is bounded from $L^{1}(\mathbb{R}^n)$ to $L^{1,\,\infty}(\mathbb{R}^n)$ with bound less than $Cl^{\frac{1}{2}}\|\Omega\|_{L^{\infty}(S^{n-1})}$.
\end{lemma}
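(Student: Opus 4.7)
Normalize $\|\Omega\|_{L^\infty(S^{n-1})}=1$ by homogeneity, so that the task reduces to showing $\|\widetilde{\mu}_\Omega^l\|_{L^1\to L^{1,\infty}}\lesssim l^{1/2}$. The strategy is a Calder\'on--Zygmund weak-type $(1,1)$ argument for the vector-valued operator $\widetilde{\mu}_\Omega^l$, refined so as to extract the $l^{1/2}$-dependence rather than the naive $l$. For $f\in L^1(\mathbb{R}^n)$ and $\lambda>0$, perform the standard Calder\'on--Zygmund decomposition at height $\lambda$: write $f=g+b$ with $\|g\|_\infty\lesssim\lambda$, $\|g\|_1\le\|f\|_1$, and $b=\sum_Q b_Q$ supported on a pairwise disjoint family of cubes $\{Q\}$, each $b_Q$ having mean zero, $\|b_Q\|_1\lesssim\lambda|Q|$, and $\sum_Q|Q|\lesssim\|f\|_1/\lambda$. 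Set $E=\bigcup_Q 4\sqrt{n}Q$, so $|E|\lesssim\|f\|_1/\lambda$. For the good part, \eqref{eq:equi}, \eqref{eq2.5} and the classical $L^2$-boundedness of $\mu_\Omega$ together imply that $\widetilde{\mu}_\Omega^l$ is $L^2$-bounded with constant $\lesssim 1$, uniformly in $l$; Chebyshev combined with $\|g\|_2^2\le\|g\|_\infty\|g\|_1\lesssim\lambda\|f\|_1$ yields the acceptable contribution $|\{\widetilde{\mu}_\Omega^l g>\lambda/2\}|\lesssim\|f\|_1/\lambda$.

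For the bad part, exploit cancellation: using $\int b_Q=0$, write
$$F_j^l b_Q(x,t)=\int\bigl[K_t^j*\phi_{j-l}(x-y)-K_t^j*\phi_{j-l}(x-y_Q)\bigr]b_Q(y)\,dy,$$
where $y_Q$ is the center of $Q$. Decompose $\mathbb{R}^n\setminus E$ into dyadic annuli $\mathscr{A}_k=\{x:2^k\ell(Q)\le|x-y_Q|<2^{k+1}\ell(Q)\}$, and set $a_{Q,k}=\int_{\mathscr{A}_k}\widetilde{\mu}_\Omega^l b_Q(x)\,dx$. Minkowski's inequality for the vector-valued kernel, $\|\cdot\|_{\ell^2(j)}\le\|\cdot\|_{\ell^1(j)}$, and the smoothness estimate \eqref{eq2.6} give the pointwise bound $a_{Q,k}\lesssim\|b_Q\|_1\min\{1,2^{l-k}\}$; direct summation over $k$ loses a factor of order $l$. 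To sharpen, split at the threshold $K_0=l$: the far annuli $k>l$ contribute the geometrically summable tail $\lesssim\|b_Q\|_1$, while for the close annuli $k\le l$ apply Cauchy--Schwarz in $k$,
$$\sum_{k=0}^{l}a_{Q,k}\le(l+1)^{1/2}\Bigl(\sum_{k=0}^{l}a_{Q,k}^{2}\Bigr)^{1/2}.$$
The inner $\ell^2$-sum is controlled via H\"older on each annulus, $a_{Q,k}^2\le|\mathscr{A}_k|\,\|\widetilde{\mu}_\Omega^l b_Q\|_{L^2(\mathscr{A}_k)}^2$, combined with the annular localization of $K_t^j*\phi_{j-l}$ and the vector-valued $L^2$-orthogonality of the square-function pieces; this yields $\sum_{k}a_{Q,k}^2\lesssim\|b_Q\|_1^2$, whence $\sum_{k\le l}a_{Q,k}\lesssim l^{1/2}\|b_Q\|_1$. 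Summing over $Q$ and applying Chebyshev produces $|\{x\notin E:\widetilde{\mu}_\Omega^l b(x)>\lambda/2\}|\lesssim l^{1/2}\|f\|_1/\lambda$, which together with the good-part bound and $|E|\lesssim\|f\|_1/\lambda$ completes the proof.

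\textbf{Main obstacle.} The improvement from $l$ to $l^{1/2}$ in the bad-part estimate is the technical heart of the argument. A direct H\"ormander summation of \eqref{eq2.6} loses the full factor $\sum_{k\ge 0}\min\{1,2^{l-k}\}\sim l$, and the $L^2$-boundedness cannot be invoked on $b$ directly since $\|b\|_2$ is not controlled by the Calder\'on--Zygmund data. Bridging these two estimates via the Cauchy--Schwarz-plus-$L^2$-orthogonality trade-off on the $O(l)$ close annuli is the crucial step; making the inner $\ell^2$-bound precise requires careful use of the vector-valued ($\ell^2(j)\otimes L^2(dt)$) structure of $\widetilde{\mu}_\Omega^l$ and of the fact that $K_t^j*\phi_{j-l}$ is essentially localized at the scale $2^j$.
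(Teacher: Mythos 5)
Your overall skeleton agrees with the paper's: Calder\'on--Zygmund decomposition at height $\lambda$, the good part handled by the uniform-in-$l$ $L^2$-boundedness coming from \eqref{eq2.5}, the bad part estimated off the enlarged cubes $E$. The paper's proof takes exactly this route. The difference lies entirely in the mechanism you invoke to gain the square root, and there your argument has a genuine gap.

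Your trade-off is a Cauchy--Schwarz in the annular index $k$: you write $\sum_{k\le l}a_{Q,k}\le (l+1)^{1/2}\big(\sum_{k\le l}a_{Q,k}^2\big)^{1/2}$ and claim $\sum_k a_{Q,k}^2\lesssim\|b_Q\|_1^2$. This last estimate does not hold, and the appeal to ``vector-valued $L^2$-orthogonality plus annular localization'' does not produce it. Tracking through your own bound $a_{Q,k}^2\le|\mathscr{A}_k|\,\|\widetilde{\mu}_\Omega^l b_Q\|_{L^2(\mathscr{A}_k)}^2$, the kernel estimate \eqref{eq2.6} together with the localization $\mathrm{supp}\,K_t^j*\phi_{j-l}\subset\{2^{j-3}\le|z|\le 2^{j+3}\}$ gives only $a_{Q,k}\lesssim\min\{1,2^{l-k}\}\|b_Q\|_1$, which is essentially $\|b_Q\|_1$ for every $k\le l$ with no further decay. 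Hence $\sum_{k\le l}a_{Q,k}^2\sim l\|b_Q\|_1^2$, and your Cauchy--Schwarz step returns $(l+1)^{1/2}\cdot l^{1/2}\|b_Q\|_1\sim l\|b_Q\|_1$, i.e.\ exactly the naive bound. The annular $\ell^2$-trick is vacuous here precisely because the $O(l)$ close annuli all contribute comparably: when $\ell^1\sim l\,\ell^\infty$ and $\ell^2\sim l^{1/2}\ell^\infty$, Cauchy--Schwarz between them is tight and yields nothing.

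The paper gains $l^{1/2}$ by a different, pointwise Cauchy--Schwarz in the square-function summation index $j$, not in $k$. Writing $(\widetilde{\mu}_\Omega^l b(x))^2=\int_1^2\sum_j|F_j^l b(x,t)|^2\,dt$, one bounds $\sum_j|F_j^lb|^2\le\big(\sup_{j,t}|F_j^lb|\big)\big(\sum_j|F_j^lb|\big)$, so that
$$\big(\widetilde{\mu}_\Omega^l b(x)\big)^2\le\Big(\sup_{j\in\mathbb{Z}}\sup_{t\in[1,2]}|K_t^j*\phi_{j-l}*b(x)|\Big)\cdot U_l(b)(x),\qquad U_l(b)(x)=\int_1^2\sum_j|F_j^lb(x,t)|\,dt.$$
The key asymmetry, which your decomposition cannot see, is that the $\ell^\infty(j)$ factor is dominated pointwise by $Mb(x)$ (since $|K_t^j*\phi_{j-l}|\lesssim 2^{-jn}\chi_{\{2^{j-4}\le|x|\le 2^{j+4}\}}$), giving a weak $(1,1)$ estimate with constant $O(1)$, while the $\ell^1(j)$ factor $U_l(b)$ satisfies the H\"ormander-type bound with constant $O(l)$ via \eqref{eq2.6}. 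Splitting the level set $\{\widetilde{\mu}_\Omega^lb>\lambda/2\}$ according to whether $\sup_{j,t}|F_j^lb|>l^{-1/2}\lambda/2$ or $U_l(b)>l^{1/2}\lambda/2$ then produces exactly the geometric mean $l^{1/2}$. You would need some device of this sort --- an interpolation between the $\ell^1(j)$-kernel estimate and an $l$-uniform maximal bound --- to close the gap in your argument; the annular Cauchy--Schwarz by itself does not.
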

\begin{proof} We modify the argument used in the proof of Lemma 2.3 in \cite{huqu}, in which it was proved that ${\widetilde{\mu}_{\Omega}^l}$ is bounded from $L^{1}(\mathbb{R}^n)$ to $L^{1,\,\infty}(\mathbb{R}^n)$ with bound less than $Cl\|\Omega\|_{L^{\infty}(S^{n-1})}$. The argument here involves some refined kernel estimates. Our goal is to prove that  for any $\lambda>0$,
\begin{eqnarray}\label{eq:weak type(1,1)}
\big|\big\{x\in\mathbb{R}^n:\,\widetilde{\mu}_{\Omega}^l(f)(x)>\lambda\big\}\big|\lesssim l^{\frac{1}{2}}\lambda^{-1}\|\Omega\|_{L^{\infty}(S^{n-1})}\|f\|_{L^1(\mathbb{R}^n)}.\end{eqnarray}
Without loss of generality, we assume that $\|\Omega\|_{L^{\infty}(S^{n-1})}=1$. For each fixed $\lambda>0$,  applying the Calder\'on-Zygmund decomposition to $|f|$ at level $\lambda$,
we obtain a sequence of cubes $\{Q_i\}$ with disjoint interiors, such that
$$\lambda<\frac{1}{|Q_i|}\int_{Q_i}|f(y)|dy\le 2^n\lambda,$$
and  $|f(y)|\lesssim \lambda$ for a. e. $y\in\mathbb{R}^n\backslash\big(\cup_{i}Q_i\big).$ Set
$$g(y)=f(y)\chi_{\mathbb{R}^n\backslash\cup_iQ_i}(y)+\sum_{i}\langle f\rangle_{Q_i}\chi_{Q_i}(y),$$
$$b(y)=\sum_{i}b_i(y),\,\,\hbox{with}\,\,
b_i(y)=\big(f(y)-\langle f\rangle_{Q_i}\big)\chi_{Q_i}(y).$$
By \eqref{eq2.5} and the $L^2(\mathbb{R}^n)$ boundedness of $\widetilde{\mu}_{\Omega}$, we know that $\widetilde{\mu}_{\Omega}^l$ is also bounded on $L^2(\mathbb{R}^n)$ with bound independent of $l$. Therefore,
$$|\{x\in\mathbb{R}^n:\,\widetilde{\mu}_{\Omega}^l(g)(x)>\lambda/2\}|\lesssim\lambda^{-2}
\|\widetilde{\mu}_{\Omega}^lg\|_{L^2(\mathbb{R}^n)}^2\lesssim\lambda^{-1}\|f\|_{L^1(\mathbb{R}^n)}.$$
Let $E_{\lambda}=\cup_i4nQ_i$. It is obvious that $|E_{\lambda}|\lesssim \lambda^{-1}\|f\|_{L^1(\mathbb{R}^n)}$. The proof of \eqref{eq:weak type(1,1)} is now reduced to proving that
\begin{eqnarray}\label{eq2.8}
|\{x\in\mathbb{R}^n\backslash E_{\lambda}:\,\widetilde{\mu}_{\Omega}^l(b)(x)>\lambda/2\}|\lesssim l^{\frac{1}{2}}\lambda^{-1}\|f\|_{L^1(\mathbb{R}^n)}.
\end{eqnarray}

We now prove \eqref{eq2.8}. Let
$$U_l(b)(x)=\int^2_1\sum_{j\in\mathbb{Z}}\big|F_{j}^lb(x,\,t)\big|
dt.
$$
For each fixed $i$, let $x_i$ be the center of $Q_i$. A trivial computation involving (\ref{eq2.6}) shows that for $y\in Q_i$ and $t\in [1,\,2]$,
\begin{eqnarray*}
&&\sum_j\int_{\mathbb{R}^n\backslash 4nQ_i}|K_{t}^j*\phi_{j-l}(x-y)-K_t^j*\phi_{j-l}(x-x_i)|dx\\
&&\quad\lesssim \sum_j\sum_{k=2}^{\infty}\sup_{x\in 2^{k+1}Q_i\backslash 2^{k}Q_i}|K_t^j*\phi_{j-l}(x-y)-K_t^j*\phi_{j-l}(x-x_i)||2^k Q_i|\\
&&\quad\lesssim\sum_{k=2}^{\infty}\min\{1,\,2^{l-k}\}\lesssim l.
\end{eqnarray*}
Therefore,
\begin{eqnarray}\label{eq2.9}
&&|\{x\in\mathbb{R}^n\backslash E_{\lambda}:\,U_l(b)(x)>l^{\frac{1}{2}}\lambda/2\}|\\
&&\quad\le 2l^{-\frac{1}{2}}\lambda^{-1}\sum_i\sum_j\int^2_1\int_{\mathbb{R}^n\backslash E_{\lambda}}|F_{j}^lb_i(x,\,t)|dxdt\nonumber\\
&&\quad\lesssim l^{\frac{1}{2}}\lambda^{-1}\|f\|_{L^1(\mathbb{R}^n)}.\nonumber
\end{eqnarray}
Observe that for each fixed $j\in\mathbb{Z}$,
$$\sup_{t\in [1,\,2]}\Big|K^j_{t}*\phi_{j-l}(x)\Big|\lesssim \int_{\mathbb{R}^n}\widetilde{K}^j(z)\big|\phi_{j-l}
(x-z)\big|dz\lesssim 2^{-jn}\chi_{\{x:\,2^{j-4}\leq |x|\leq 2^{j+4}\}}(x),$$
with $\widetilde{K}^j(z)=|z|^{-n}|\Omega(z)|\chi_{\{2^{j-2}\leq |z|\leq 2^{j+2}\}}(z).$ It then follows   that
$$\sup_{j\in\mathbb{Z}}\sup_{t\in [1,\,2]}\big|K^j_{t}*\phi_{j-l}*b(x)\big|\lesssim Mb(x).$$
This, in turn , implies that
\begin{eqnarray}\label{eq2.10}&&|\{x\in\mathbb{R}^n:\,\sup_{j\in\mathbb{Z}}\sup_{t\in [1,\,2]}\big|K^j_{t}*\phi_{j-l}*b(x)\big|>l^{-\frac{1}{2}}\lambda/2\}|\lesssim l^{\frac{1}{2}}\lambda^{-1}\|f\|_{L^1(\mathbb{R}^n)}.
\end{eqnarray}
Combining estimates (\ref{eq2.9}) and (\ref{eq2.10}) leads to that
\begin{eqnarray*}&&|\{x\in\mathbb{R}^n\backslash E_{\lambda}:\,\widetilde{\mu}_{\Omega}^l(b)(x)>\lambda/2\}|\\
&&\leq|\{x\in\mathbb{R}^n\backslash E_{\lambda}:\,U_l(b)(x)>l^{\frac{1}{2}}\lambda/2\}|\\
&&\quad+|\{x\in\mathbb{R}^n:\,\sup_{j\in\mathbb{Z}}\sup_{t\in [1,\,2]}\big|K^j_{t}*\phi_{j-l}*b(x)\big|>l^{-\frac{1}{2}}\lambda/2\}|\\
&&\quad\lesssim \lambda^{-1}l^{\frac{1}{2}}\|f\|_{L^1(\mathbb{R}^n)}.
\end{eqnarray*}
This leads to (\ref{eq2.8}) and completes the proof of Lemma 2.2.
\end{proof}
\begin{lemma}\label{lem2.3} Let $\Omega$ be homogeneous of degree zero and have mean value zero.
Suppose that $\Omega\in L^{\infty}(S^{n-1})$. Let $\mathcal{M}_{\widetilde{\mu}_{\Omega}^l}$ be the grand maximal operator  defined by
$$\mathcal{M}_{\widetilde{\mu}_{\Omega}^l}f(x)=\sup_{Q\ni x}\|\widetilde{\mu}_{\Omega}^l(f\chi_{\mathbb{R}^n\backslash 3Q})\|_{L^{\infty}(Q)}$$
Then $\mathcal{M}_{\widetilde{\mu}_{\Omega}^l}$ is bounded from $L^{1}(\mathbb{R}^n)$ to $L^{1,\,\infty}(\mathbb{R}^n)$ with bound $Cl^{\frac{1}{2}}\|\Omega\|_{L^{\infty}(S^{n-1})}$.
\end{lemma}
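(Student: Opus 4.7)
The plan is to establish a pointwise oscillation estimate for $\widetilde{\mu}_{\Omega}^l$ on cubes with an $l^{1/2}$ (rather than $l$) dependence, and then to combine it with the weak-$(1,1)$ bound of Lemma \ref{lem2.2} through a standard level-set argument. The refinement from $l$ to $l^{1/2}$ in the oscillation estimate is the pointwise analogue of the Chebyshev splitting used in the proof of Lemma \ref{lem2.2}, and is realized via a Cauchy--Schwarz factorization of the $\ell^2_j L^2_t$ norm.

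I would first fix a cube $Q\ni x_0$, two points $\xi,z\in Q$, and write $h=f\chi_{\mathbb{R}^n\setminus 3Q}$. Setting $a_j(t)=F_j^lh(\xi,t)-F_j^lh(z,t)$, Minkowski's inequality together with the elementary product bound $\int_1^2\sum_j|a_j(t)|^2\,dt\le \sup_{j,t}|a_j(t)|\cdot\int_1^2\sum_j|a_j(t)|\,dt$ gives
$$|\widetilde{\mu}_{\Omega}^l(h)(\xi)-\widetilde{\mu}_{\Omega}^l(h)(z)|^2\le V(\xi,z)\cdot U(\xi,z),$$
where $V(\xi,z)=\sup_{j,t}|a_j(t)|$ and $U(\xi,z)=\int_1^2\sum_j|a_j(t)|\,dt$. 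The sup factor obeys $V(\xi,z)\le Mh(\xi)+Mh(z)\lesssim\|\Omega\|_{L^\infty(S^{n-1})}Mf(x_0)$ via the support bound on $\sup_t|K_t^j*\phi_{j-l}|$ recorded in the proof of Lemma \ref{lem2.2} and the observation that $|\xi-y|\gtrsim\ell(Q)$ for $\xi\in Q$ and $y\in\mathbb{R}^n\setminus 3Q$. For $U(\xi,z)$, a dyadic decomposition of $\mathbb{R}^n\setminus 3Q$ into annuli $2^{k+1}\cdot 3Q\setminus 2^k\cdot 3Q$ together with (\ref{eq2.6}) produces $U(\xi,z)\lesssim l\|\Omega\|_{L^\infty(S^{n-1})}Mf(x_0)$, exactly as in the computation leading to (\ref{eq2.9}). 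Multiplying and taking square roots yields the oscillation bound
$$|\widetilde{\mu}_{\Omega}^l(h)(\xi)-\widetilde{\mu}_{\Omega}^l(h)(z)|\lesssim l^{1/2}\|\Omega\|_{L^\infty(S^{n-1})}Mf(x_0).$$

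With the oscillation bound in hand, I would compare $\widetilde{\mu}_{\Omega}^l(f\chi_{\mathbb{R}^n\setminus 3Q})$ to $\widetilde{\mu}_{\Omega}^l(f)$ on a large subset of $Q$. Applying Lemma \ref{lem2.2} to $f\chi_{3Q}$ and Chebyshev's inequality produces $G_Q\subset Q$ with $|G_Q|\ge 3|Q|/4$ on which $\widetilde{\mu}_{\Omega}^l(f\chi_{3Q})(z)\le Cl^{1/2}\|\Omega\|_{L^\infty(S^{n-1})}Mf(x_0)$, using $\langle|f|\rangle_{3Q}\lesssim Mf(x_0)$ for $x_0\in Q$. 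Writing $f=f\chi_{3Q}+f\chi_{\mathbb{R}^n\setminus 3Q}$ and combining with the oscillation bound, for every $\xi\in Q$ and every $z\in G_Q$,
$$\widetilde{\mu}_{\Omega}^l(f\chi_{\mathbb{R}^n\setminus 3Q})(\xi)\le\widetilde{\mu}_{\Omega}^l(f)(z)+Cl^{1/2}\|\Omega\|_{L^\infty(S^{n-1})}Mf(x_0).$$
Normalizing $\|\Omega\|_{L^\infty(S^{n-1})}=1$ and fixing $\lambda>0$, for any $x_0$ with $\mathcal{M}_{\widetilde{\mu}_{\Omega}^l}f(x_0)>C_1\lambda$ and $Mf(x_0)\le\lambda/(C_2l^{1/2})$ (with $C_1\gg C_2$) the definition of $\mathcal{M}_{\widetilde{\mu}_{\Omega}^l}$ supplies a cube $Q\ni x_0$ with $\|\widetilde{\mu}_{\Omega}^l(f\chi_{\mathbb{R}^n\setminus 3Q})\|_{L^\infty(Q)}>C_1\lambda$, which forces $\widetilde{\mu}_{\Omega}^l(f)(z)>\lambda$ throughout $G_Q$ and hence $|Q\cap E|\ge 3|Q|/4$ for $E=\{\widetilde{\mu}_{\Omega}^l(f)>\lambda\}$. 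Thus $x_0\in\{M\chi_E>3/4\}$, whose measure is $\lesssim|E|\lesssim l^{1/2}\lambda^{-1}\|f\|_{L^1(\mathbb{R}^n)}$ by the weak-$(1,1)$ of $M$ and Lemma \ref{lem2.2}, while $|\{Mf>\lambda/(C_2l^{1/2})\}|\lesssim l^{1/2}\lambda^{-1}\|f\|_{L^1(\mathbb{R}^n)}$ by weak-$(1,1)$ of $M$; summing delivers the claimed bound.

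The hard part will be arranging the Cauchy--Schwarz factorization of the oscillation so that the two factors are sharp simultaneously: the sup factor must cost only $Mf$, which requires the clean support estimate on $K_t^j*\phi_{j-l}$, while the $\ell^1_jL^1_t$ factor must pick up exactly the $l$ coming from the summability of $\sum_k\min\{1,2^{l-k}\}$ in (\ref{eq2.6}). Any loss in either factor would degrade the overall bound to $l$ and only recover the constant already implicit in a direct adaptation of the proof of Lemma \ref{lem2.2}.
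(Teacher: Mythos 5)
Your argument is correct in substance and arrives at the same $l^{1/2}$ oscillation bound that drives the paper's proof, but it takes a different route in both halves.

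For the oscillation, the paper passes directly to $\sup_{t}\sum_j|a_j(t)|^2$, bounds each $|a_j(t)|$ by $\min\{1,2^{l-j}R\}Mf(x)$ using the pointwise kernel bound $|R_t^{j,l}(x;y,\xi)|\lesssim 2^{-jn}\min\{1,2^{l-j}|x-\xi|\}$, and then reads off $\sum_{j}\min\{1,2^{l-j}R\}^2\lesssim l$. Your $V\cdot U$ factorization is the same arithmetic in disguise (the paper's direct $\ell^2$ computation is shorter, and the factorization is not actually a delicate point despite your closing remark). Two small imprecisions in your writeup: the claim $V(\xi,z)\le Mh(\xi)+Mh(z)\lesssim Mf(x_0)$ is not literally correct, because $Mh(\xi)$ need not be comparable to $Mf(x_0)$; what one really uses is that the relevant convolutions only average over balls of radius $\gtrsim\ell(Q)$, so they are comparable at any point of $Q$ and bounded by $\inf_{y\in Q}Mf(y)$. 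And your appeal to (\ref{eq2.6}) to bound $U$ "exactly as in (\ref{eq2.9})" is not quite immediate: (\ref{eq2.9}) removes $4nQ_i$ while you remove only $3Q$, so the annuli between $3Q$ and $\approx 4\sqrt{n}Q$ fall outside the range of (\ref{eq2.6}) and must be handled separately (each contributes $O(Mf(x_0))$ from finitely many scales $j$). The paper avoids this entirely by working with the pointwise kernel bound rather than (\ref{eq2.6}).

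The second half differs more substantially. The paper splits $f\chi_{\mathbb{R}^n\setminus 3Q}$ against $f\chi_{B_x\setminus 3Q}$ and $f\chi_{\mathbb{R}^n\setminus B_x}$, invokes (\ref{eq2.11}) and the companion estimate $\widetilde{\mu}_{\Omega}^l(f\chi_{\mathbb{R}^n\setminus B_x})(x)\lesssim Mf(x)+\widetilde{\mu}_{\Omega}^l(f)(x)$ (both cited from \cite{huqu}), and derives the genuine pointwise Cotlar inequality $\mathcal{M}_{\widetilde{\mu}_{\Omega}^l}f(x)\lesssim l^{1/2}Mf(x)+\widetilde{\mu}_{\Omega}^l(f)(x)$; the weak-$(1,1)$ bound then drops out of Lemma \ref{lem2.2} and the weak-$(1,1)$ of $M$. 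You instead run a set-theoretic good-$\lambda$ argument: apply Lemma \ref{lem2.2} to $f\chi_{3Q}$ to produce a large subset $G_Q$ of $Q$ on which the local contribution is controlled, deduce $G_Q\subset\{\widetilde{\mu}_{\Omega}^l f>\lambda\}$, and conclude via $M\chi_E$. (Watch the strict inequality $M\chi_E>3/4$; you want $|G_Q|$ strictly larger than $3|Q|/4$, or use $M\chi_E\ge 1/2$, to make the inclusion honest.) This route is self-contained in the sense that it does not need the cited pointwise estimates (\ref{eq2.11})--(2.12), relying only on Lemma \ref{lem2.2} and the oscillation bound; the price is a somewhat longer argument than the paper's clean pointwise Cotlar inequality.
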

\begin{proof}Again we assume that $\|\Omega\|_{L^{\infty}(S^{n-1})}=1$.
Let $x\in\mathbb{R}^n$  and $Q\subset \mathbb{R}^n$ be a cube containing $x$. Let$R=2{\rm diam}Q$ and denote by $B_x$ the closed ball centered
at $x$ with  radius $R$. Then $3Q\subset B_x$. For each $\xi\in  Q$,   we can write
\begin{eqnarray*}|\widetilde{\mu}_{\Omega}^l(f\chi_{R^n\backslash 3Q})(\xi)| &\leq & |\widetilde{\mu}_{\Omega}^l(f\chi_{\mathbb{R}^n\backslash B_x})(\xi)- \widetilde{\mu}_{\Omega}^l(f\chi_{\mathbb{R}^n\backslash B_x})(x)
|\\
&&+ \widetilde{\mu}_{\Omega}^l(f\chi_{B_x\backslash 3Q})(\xi) + \widetilde{\mu}_{\Omega}^l(f\chi_{\mathbb{R}^n\backslash B_x})(x).
\end{eqnarray*}
It was proved in \cite{huqu} that
\begin{eqnarray}\label{eq2.11}\sup_{\xi\in Q}\widetilde{\mu}_{\Omega}^l(f\chi_{B_x\backslash 3Q})(\xi)\lesssim Mf(x),
\end{eqnarray}
and
\begin{eqnarray}\widetilde{\mu}_{\Omega}^l(f\chi_{\mathbb{R}^n\backslash B_x})(x)\lesssim Mf(x)+\widetilde{\mu}_{\Omega}^l(f)(x).\end{eqnarray}
Now write
\begin{eqnarray*}
&&|\widetilde{\mu}_{\Omega}^l(f\chi_{\mathbb{R}^n\backslash B_x})(\xi)- \widetilde{\mu}_{\Omega}^l(f\chi_{\mathbb{R}^n\backslash B_x})(x)
|\\
&&\quad\leq \Big(\int^2_1\sum_{j\in\mathbb{Z}}\Big|\int_{\mathbb{R}^n}R_{t}^{j,\,l}(x;\,y,\,\xi)f(y)\chi_{\mathbb{R}^n\backslash B_x}(y)dy\Big|^2dt\Big)^{\frac{1}{2}},
\end{eqnarray*}
where
$$R_{t}^{j,\,l}(x;\,y,\,\xi)=|K_t^j*\phi_{l-j}(x-y)-K_t^j*\phi_{l-j}(\xi-y)|.$$
Obviously, for $x,\,\xi\in Q$,
$R_{t}^{j,\,l}(x;\,y,\,\xi)\chi_{\mathbb{R}^n\backslash Q_x}(y)\not =0$ only if $y\in B(x,\,2^{j+5})$, since
$${\rm supp}\,K_t^j*\phi_{l-j}\subset \{z:\, 2^{j-3}\leq |z|\leq 2^{j+3}\}.$$
A trivial computation shows that
$$|R_{t}^{j,\,l}(x;\,y,\,\xi)|\lesssim 2^{-jn}\min\{1,\,2^{l-j}|x-\xi|\}.
$$
This, along with H\"older's inequality, gives us that¡¡
\begin{eqnarray}\label{eq2.13}
&&\sup_{t\in [1,\,2]}\sum_{j\in\mathbb{Z}}\Big|\int_{\mathbb{R}^n}R_{t}^{j,\,l}(x;\,y,\,\xi)f(y)\chi_{\mathbb{R}^n\backslash B_x}(y)dy\Big|^2
\\
&&\quad\leq \sup_{t\in [1,\,2]}\sum_{j:2^j>\frac{R}{16}}\Big|\int_{B(x,\,2^{j+5})}R_{t}^{j,\,l}(x;\,y,\,\xi)f(y)\chi_{\mathbb{R}^n\backslash B_x}(y)dy\Big|^2\nonumber\\
&&\quad\lesssim \sum_{j:\,2^j>\frac{R}{16}}\big(\min\{1,\,\frac{2^lR}{2^j}\}\big)^2\big(Mf(x)\big)^2\lesssim l\big(Mf(x)\big)^2.\nonumber
\end{eqnarray}
Collecting  estimates (\ref{eq2.11})-(\ref{eq2.13}) yields
$$\mathcal{M}_{\widetilde{\mu}^l_{\Omega}}f(x)\lesssim l^{\frac{1}{2}}Mf(x)+\widetilde{\mu}_{\Omega}^l(f)(x).$$
This, via Lemma \ref{lem2.2}, leads to our desired conclusion.
\end{proof}

Given an operator $T$, define the maximal operator ${M}_{\lambda,\,T}$ by
$${M}_{\lambda,\,T}f(x)=\sup_{Q\ni x}\Big(T(f\chi_{\mathbb{R}^n\backslash 3Q})\chi_{Q}\Big)^*(\lambda |Q|),\,\,(0<\lambda<1),$$
where the supremum is taken over all cubes $Q\subset \mathbb{R}^n$ containing $x$, and $h^*$ denotes the non-increasing rearrangement of $h$.
The operator ${M}_{\lambda,\,T}$ was introduced by Lerner \cite{ler5} and is useful in the study of weighted bounds for rough operators, see \cite{ler5,hulai}.
\begin{theorem}\label{dingli2.1}Let $\Omega$ be homogeneous of degree zero and have mean value zero.
Suppose that $\Omega\in L^{\infty}(S^{n-1})$. Then for $\lambda\in (0,\,1)$,
\begin{eqnarray}\label{eq2.14}\|M_{\lambda,\,\widetilde{\mu}_{\Omega}}f\|_{L^{1,\,\infty}(\mathbb{R}^n)}\lesssim \|\Omega\|_{L^{\infty}(S^{n-1})}\big(1+\log^{\frac{1}{2}} \big(\frac{1}{\lambda}\big)\big)\|f\|_{L^{1}(\mathbb{R}^n)}.\end{eqnarray}
\end{theorem}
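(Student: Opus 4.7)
The strategy I would pursue, inspired by Lerner \cite{ler5}, is to split $\widetilde{\mu}_{\Omega}$ into the smooth approximation $\widetilde{\mu}_{\Omega}^l$ and the error $\widetilde{\mu}_{\Omega}-\widetilde{\mu}_{\Omega}^l$, and then optimize $l$ in terms of $\lambda$. Using the elementary rearrangement inequality $(h_1+h_2)^*(2t)\le h_1^*(t)+h_2^*(t)$ in the definition of $M_{\lambda,\widetilde{\mu}_{\Omega}}$, together with $\widetilde{\mu}_{\Omega}(h)\le \widetilde{\mu}_{\Omega}^l(h)+|(\widetilde{\mu}_{\Omega}-\widetilde{\mu}_{\Omega}^l)(h)|$, one obtains the pointwise bound
\[
M_{\lambda,\widetilde{\mu}_{\Omega}}f(x)\le M_{\lambda/2,\widetilde{\mu}_{\Omega}^l}f(x)+M_{\lambda/2,\widetilde{\mu}_{\Omega}-\widetilde{\mu}_{\Omega}^l}f(x),
\]
valid for every positive integer $l$.

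The first term is handled immediately: since $(h\chi_Q)^*(\lambda|Q|)\le\|h\|_{L^\infty(Q)}$, we have $M_{\lambda/2,\widetilde{\mu}_{\Omega}^l}f\le \mathcal{M}_{\widetilde{\mu}_{\Omega}^l}f$, so Lemma \ref{lem2.3} yields
\[
\|M_{\lambda/2,\widetilde{\mu}_{\Omega}^l}f\|_{L^{1,\infty}(\mathbb{R}^n)}\lesssim l^{1/2}\|\Omega\|_{L^\infty(S^{n-1})}\|f\|_{L^1(\mathbb{R}^n)}.
\]

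For the second term, write $T_l=\widetilde{\mu}_{\Omega}-\widetilde{\mu}_{\Omega}^l$; by \eqref{eq2.5} we have $\|T_l\|_{L^2\to L^2}\lesssim 2^{-\kappa l}\|\Omega\|_{L^\infty(S^{n-1})}^{1/2}$. The target is the weak-type estimate
\[
\|M_{\lambda/2,T_l}f\|_{L^{1,\infty}(\mathbb{R}^n)}\lesssim 2^{-\kappa l}\lambda^{-1/2}\|\Omega\|_{L^\infty(S^{n-1})}^{1/2}\|f\|_{L^1(\mathbb{R}^n)}.
\]
To prove this I would, for a fixed threshold $\alpha>0$, apply a Calder\'on--Zygmund decomposition of $f$ at level $\alpha$, writing $f=g+b$ with $\|g\|_{L^\infty}\lesssim \alpha$ and $\|g\|_{L^2}^2\lesssim \alpha\|f\|_{L^1}$, and $b=\sum_i b_i$, where $b_i$ is supported on a cube $Q_i$ with $\int b_i=0$ and $\sum_i|Q_i|\lesssim \alpha^{-1}\|f\|_{L^1}$. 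For the good part, Kolmogorov's inequality $(h\chi_Q)^*(\lambda|Q|)\le (\lambda|Q|)^{-1/2}\|h\|_{L^2(Q)}$ combined with the $L^2$-decay of $T_l$ controls $M_{\lambda/2,T_l}g$ at the $L^2$ level. For the bad part, outside the exceptional set $\cup_i 4nQ_i$ one uses the vanishing mean of each $b_i$ together with the kernel regularity \eqref{eq2.6}, mirroring the decomposition used in the proofs of Lemmas \ref{lem2.2} and \ref{lem2.3}, to transfer the cancellation to the operator level. Finally, choosing $l\sim \log(1/\lambda)$ makes the second term $O(\|f\|_{L^1(\mathbb{R}^n)})$ while the first contributes $l^{1/2}\sim \log^{1/2}(1/\lambda)$, producing exactly the factor $1+\log^{1/2}(1/\lambda)$ in \eqref{eq2.14}.

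The main obstacle is the weak-type $(1,1)$ bound for $M_{\lambda/2,T_l}$ with the right $\lambda^{-1/2}$ dependence. Since $T_l$ is a difference of two sublinear square functions rather than a linear operator, the identity $T_l(g+b)=T_l g+T_l b$ is not available, and the cancellation of the bad part must be tracked through the reverse triangle inequality $|\widetilde{\mu}_{\Omega}(f_1)-\widetilde{\mu}_{\Omega}(f_2)|\le \widetilde{\mu}_{\Omega}(f_1-f_2)$ and its analogue for $\widetilde{\mu}_{\Omega}^l$, together with the sublinear auxiliary square function $\bigl(\int_1^2\sum_j|(K_t^j-K_t^j*\phi_{j-l})*f|^2\,dt\bigr)^{1/2}$ which dominates $|T_l f|$ pointwise and inherits the same $L^2$-decay as $T_l$. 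Transferring this $L^2$-decay to the grand-maximal level with only a $\lambda^{-1/2}$ cost is the delicate step of the argument.
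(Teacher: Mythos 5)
Your high-level split $M_{\lambda,\widetilde{\mu}_{\Omega}}f\le M_{\lambda/2,\widetilde{\mu}_{\Omega}^l}f+M_{\lambda/2,\widetilde{\mu}_{\Omega}-\widetilde{\mu}_{\Omega}^l}f$, the reduction of the first term to Lemma~\ref{lem2.3}, and the final choice $l\sim\log(1/\lambda)$ are all consistent with the paper's approach. However, the intermediate target you set for the second term,
\[
\|M_{\lambda/2,\,\widetilde{\mu}_{\Omega}-\widetilde{\mu}_{\Omega}^l}f\|_{L^{1,\infty}(\mathbb{R}^n)}\lesssim 2^{-\kappa l}\lambda^{-1/2}\|\Omega\|_{L^{\infty}(S^{n-1})}\|f\|_{L^1(\mathbb{R}^n)},
\]
is not provable by the route you sketch, and is in fact too strong. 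The $2^{-\kappa l}$ decay in \eqref{eq2.5} only reflects the $L^2\to L^2$ norm of $T_l=\widetilde{\mu}_{\Omega}-\widetilde{\mu}_{\Omega}^l$; it tells you nothing about the kernel regularity needed to run the bad-part half of a Calder\'on--Zygmund argument. Since $T_l$ is a difference of sublinear square functions, the bad part of $T_lb$ must be split into $\widetilde{\mu}_{\Omega}^l(b)$ and $\widetilde{\mu}_{\Omega}(b)$. For the smoothed piece $\widetilde{\mu}_{\Omega}^l(b)$, \eqref{eq2.6} applies and Lemma~\ref{lem2.3} yields the $l^{1/2}$ loss --- but $l^{1/2}$ is not small, so you cannot hope to recover any $2^{-\kappa l}$ factor here. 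For the rough piece $\widetilde{\mu}_{\Omega}(b)$, there is \emph{no} kernel regularity estimate of type \eqref{eq2.6} at your disposal (this is precisely the obstruction of $\Omega\in L^\infty$), so ``using the vanishing mean of each $b_i$ together with the kernel regularity \eqref{eq2.6}'' does not apply. Your auxiliary square function with kernel $K_t^j-K_t^j*\phi_{j-l}$ inherits the full roughness of $K_t^j$ and cannot be handled by a naive CZ bad-part argument either.

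What the paper actually does is not to prove a clean estimate for $M_{\lambda,T_l}$, but to split $f=g+b$ first (at level $\lambda^{-1}\alpha$, not $\alpha$, which matters in tracking the $\lambda$-powers), handle $M_{\lambda,T_{2l}}g$ via Lerner's $L^2$ lemma and \eqref{eq2.5}, dispatch $M_{\lambda,\widetilde{\mu}_{\Omega}^{2l}}b$ via Lemma~\ref{lem2.3} with the $l^{1/2}$ loss, and then prove the genuinely hard estimate \eqref{eq2.18} for $M_{\lambda,\widetilde{\mu}_{\Omega}}b$ off an exceptional set. This last step is the content of \eqref{eq2.19}--\eqref{eq2.24}: a Fan--Sato-style Littlewood--Paley decomposition in a parameter $s$ (the distance between the operator scale and the bad-cube scale), splitting $s\le m$ from $s>m$ with $m\sim\log(1/\lambda)$, controlling the small-$s$ range through a product of a Hardy--Littlewood maximal function and the operators $\mathcal{M}_{T_j}$ (invoking Lerner's Lemma 2.6 from \cite{ler4}), and controlling the large-$s$ range by the $L^2/L^1$ decay of the Fan--Sato kernels $K_{j,s;t}$, $M_{j,s;t}$. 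That entire machinery is missing from your proposal, and it is where the theorem is actually proved; you identify the right obstacle in your last paragraph but do not resolve it.
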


\begin{proof}
We mimic the proof of Lemma 2.3 in \cite{ler5}. Let $\mathscr{D}$ and $\mathscr{D}'$ be two dyadic lattices  and $\mathcal{F}$ be a finite family of cubes $Q$ from $\mathscr{D}$ such that $3Q\in \mathscr{D}'$. Set
$$M_{\lambda,\,\widetilde{\mu}
_{\Omega}}^{\mathcal{F}}f(x)=\Bigg\{ \begin{array}{ll}
\max_{Q\ni x,\,Q\in \mathcal{F}}\big(\widetilde{\mu}_{\Omega}(f\chi_{\mathbb{R}^n\backslash 3Q})\chi_{Q}\big)^*(\lambda |Q|),\,&x\in \cup_{Q\in\mathcal{F}}Q\\
0,\,&\hbox{otherwise}.\end{array}$$
As it was pointed out in \cite{ler5}, it suffices to prove (\ref{eq2.14}) with $M_{\lambda,\widetilde{\mu}_{\Omega}}$ replaced by $M^{\mathcal{F}}_{\lambda,\widetilde{\mu}_{\Omega}}$.

We assume that $\|\Omega\|_{L^{\infty}(S^{n-1})}=1$. Let $M^{\mathscr{D'}}$ be the mximal operator defined by
$$M^{\mathscr{D'}}f(x)=\sup_{Q\ni x,\,Q\in\mathscr{D'}}\langle |f|\rangle_Q.$$
Now let $f\in L^1(\mathbb{R}^n)$, $\alpha>0$. Decompose $\{x\in\mathbb{R}^n:\,M^{\mathscr{D}'}f(x)>\lambda^{-1}\alpha\}$ as $\{x\in\mathbb{R}^n:\,M^{\mathscr{D}'}f(x)>\lambda^{-1}\alpha\}=\cup_{P\in\mathcal{P}} P$, with $P$ the maximal cubes in $\mathscr{D}'$ such that $\frac{1}{|P|}\int_P|f(y)|dy>\lambda^{-1}\alpha$. For each $P\in\mathcal{P}$, let
$$b_P(y)=(f(y)-\langle f\rangle_P)\chi_{P}(y).$$ For $j\in \mathbb{Z}$, set $B_j(x)=\sup_{P\in\mathcal{P}:\,|P|=2^{nj}}b_P(x).$
We decompose $f$ as $f=g+b$ with $b=\sum_{P\in\mathcal{P}}b_P$ and $g=f-b.$ Then $\|g\|_{L^{\infty}(\mathbb{R}^n)}\lesssim \lambda^{-1}\alpha$.
Let $l\in\mathbb{N}$ such that $\log(\frac{1}{\lambda})+1<\kappa l\leq\log(\frac{1}{\lambda})+2$, with $\kappa$ the positive constant in (\ref{eq2.5}).
A straightforward composition involving Lemma 2.5 in \cite{ler5} and (\ref{eq2.5})  leads to that
\begin{eqnarray}\label{eq2.15}
|\{x\in\mathbb{R}^n:\,M_{\lambda/4, \widetilde{\mu}_{\Omega}-\widetilde{\mu}_{\Omega}^{2l}}^{\mathcal{F}}g(x)>\alpha\}|&\lesssim&\alpha^{-2}\lambda^{-1}
\|\widetilde{\mu}_{\Omega}(g)-\widetilde{\mu}_{\Omega}^{2l}(g)\|_{L^2(\mathbb{R}^n)}^2\\
&\lesssim&\alpha^{-2}\lambda^{-1}2^{-2\kappa l}\|g\|_{L^2(\mathbb{R}^n)}^2\nonumber\\
&\lesssim&\alpha^{-1}\|f\|_{L^1(\mathbb{R}^n)}.\nonumber
\end{eqnarray}
Set $E=\cup_{P\in\mathcal{P}}9P$ and $E^*=\{x\in\mathbb{R}^n:\,M^{\mathscr{D}}\chi_{E}(x)>\frac{\lambda}{32}\}$. We then have that
$$|E^*|\lesssim \lambda^{-1}|E|\lesssim\frac{1}{\alpha}\|f\|_{L^1(\mathbb{R}^n)}.
$$
Note that
\begin{eqnarray}&&|\{x\in\mathbb{R}^n:\, M^{\mathcal{F}}_{\lambda/4, \widetilde{\mu}_{\Omega}-\widetilde{\mu}_{\Omega}^{2l}}b(x)>\alpha/2\}|\\
&&\quad\lesssim |\{x\in\mathbb{R}^n\backslash E^*:\, M_{\lambda/8,\widetilde{\mu}_{\Omega}}^{\mathcal{F}}b(x)>\alpha/4\}|+|E^*|\nonumber\\
&&\qquad+|\{x\in\mathbb{R}^n:\, M_{\lambda/8,\widetilde{\mu}_{\Omega}^{2l}}^{\mathcal{F}}b(x)>\alpha/4\}|.\nonumber
\end{eqnarray}
Also, we have by Lemma \ref{lem2.1} that
\begin{eqnarray}\label{eq2.17}&&|\{x\in\mathbb{R}^n:\, M_{\lambda/8,\widetilde{\mu}_{\Omega}^{2l}}^{\mathcal{F}}b(x)>\alpha/4\}|\lesssim\alpha^{-1}l^{\frac{1}{2}}\|b\|_{L^1(\mathbb{R}^n)}\lesssim
\alpha^{-1}l^{\frac{1}{2}}\|f\|_{L^1(\mathbb{R}^n)}.
\end{eqnarray}
If we can prove that
\begin{eqnarray}\label{eq2.18}|\{x\in\mathbb{R}^n\backslash E^*:\, M_{\lambda/8, \widetilde{\mu}_{\Omega}}^{\mathcal{F}}b(x)>\alpha/4\}|\lesssim
\alpha^{-1}l^{\frac{1}{2}}\|f\|_{L^1(\mathbb{R}^n)},
\end{eqnarray}
then  by inequalities (\ref{eq2.15})--(\ref{eq2.18}), we have that
\begin{eqnarray*}|\{x\in\mathbb{R}^n:\, M_{\lambda/2, \widetilde{\mu}_{\Omega}-\widetilde{\mu}_{\Omega}^{2l}}^{\mathcal{F}}f(x)>\alpha/4\}|\lesssim\alpha^{-1}l^{\frac{1}{2}}\|b\|_{L^1(\mathbb{R}^n)}\lesssim
\alpha^{-1}l^{\frac{1}{2}}\|f\|_{L^1(\mathbb{R}^n)},
\end{eqnarray*}
This along with Lemma \ref{lem2.3}  and the fact that
\begin{eqnarray*}\|M_{\lambda, \widetilde{\mu}_{\Omega}}^{\mathcal{F}}f\|_{L^{1,\,\infty}(\mathbb{R}^n)}\leq \|M^{\mathcal{F}}_{\lambda/2, \widetilde{\mu}_{\Omega}-\widetilde{\mu}_{\Omega}^{2l}}f\|_{L^{1,\,\infty}(\mathbb{R}^n)}
+\|M^{\mathcal{F}}_{\lambda/2, \widetilde{\mu}_{\Omega}^{2l}}f\|_{L^{1,\,\infty}(\mathbb{R}^n)},
\end{eqnarray*}
leads to (2.14).

We now prove (\ref{eq2.18}). We will employ the ideas of Fan and Sato \cite{fans}. Let $h(r)=r\chi_{(1/2,\,1]}(r)$,  and
$$L_{j,\,t}(x)=\frac{\Omega(x)}{|x|^{n}}h(\frac{|x|}{t})\chi_{(1,\,2]}(2^{-j}t).
$$
Let $m\in\mathbb{N}$ which will be chosen later. For each $Q\in\mathcal{F}$, $x\in Q$ and $x\in\mathbb{R}^n\backslash E$, we have that
\begin{eqnarray*}&&\Big(\widetilde{\mu}_{\Omega}(b\chi_{\mathbb{R}^n\backslash 3Q})\Big)^*(\frac{\lambda|Q|}{8})\\
&&\quad=\Big[\Big(\int^{\infty}_0\Big|\sum_{s=1}^{\infty}\sum_{j\in\mathbb{Z}}L_{j,\,t}*\big(B_{j-s}\chi_{\mathbb{R}^n\backslash 3Q}\big)\big|^2\frac{dt}{t}\Big)^{\frac{1}{2}}\Big]^*(\frac{\lambda|Q|}{8})\\
&&\quad\le \Big[\Big(\int^{\infty}_0\Big|\sum_{s=1}^m\sum_{j\in\mathbb{Z}}L_{j,\,t}*\big(B_{j-s}\chi_{\mathbb{R}^n\backslash 3Q}\big)\big|^2\frac{dt}{t}\Big)^{\frac{1}{2}}\Big)^*(\frac{\lambda|Q|}{16})\\
&&\qquad+\Big[\Big(\int^{\infty}_0\Big|\sum_{s=m+1}^{\infty}\sum_{j\in\mathbb{Z}}L_{j,\,t}*\big(B_{j-s}\chi_{\mathbb{R}^n\backslash 3Q}\big)\big|^2\frac{dt}{t}\Big)^{\frac{1}{2}}\Big]^*(\frac{\lambda|Q|}{16}).
\end{eqnarray*}
For eaxh $\xi\in Q$, write
\begin{eqnarray*}&&\int^{\infty}_0\Big|\sum_{s=1}^m\sum_{j\in\mathbb{Z}}L_{j,\,t}*\big(B_{j-s}\chi_{\mathbb{R}^n\backslash 3Q}\big)(\xi)\big|^2\frac{dt}{t}\\
&&\quad\lesssim \sup_{t>0}\big|\sum_{s=1}^m\sum_{j\in\mathbb{Z}}L_{j,\,t}*\big(B_{j-s}\chi_{\mathbb{R}^n\backslash 3Q}\big)(\xi)\big|\\
&&\qquad\times \int^{\infty}_0\Big|\sum_{s=1}^m\sum_{j\in\mathbb{Z}}L_{j,\,t}*\big(B_{j-s}\chi_{\mathbb{R}^n\backslash 3Q}\big)(\xi)\big|\frac{dt}{t}
\end{eqnarray*}
A trivial computation leads to that
\begin{eqnarray*}
&&\sup_{\xi\in Q}\sup_{t>0}\big|\sum_{s=1}^m\sum_{j\in\mathbb{Z}}L_{j,\,t}*\big(B_{j-s}\chi_{\mathbb{R}^n\backslash 3Q}\big)(\xi)\big|\\
&&\quad\lesssim
\sup_{\xi\in Q}\sup_{t>0}\sum_{j\in\mathbb{Z}}|L_{j,t}|*\Big(\sum_{s=1}^m|B_{j-s}|\Big)(\xi)\lesssim\inf_{y\in Q}
Mb(y).
\end{eqnarray*}
Let $T_j$ be the operator defined by
$$T_jh(x)=\int_{2^{j-1}\leq |x-y|\leq 2^{j+2}}\frac{|\Omega(x-y)|}{|x-y|^n}h(y)dy.
$$
We have
\begin{eqnarray*}
&&\int^{\infty}_0\Big|\sum_{s=1}^m\sum_{j\in\mathbb{Z}}L_{j,\,t}*\big(B_{j-s}\chi_{\mathbb{R}^n\backslash 3Q}\big)(\xi)\big|\frac{dt}{t}\lesssim \sum_{s=1}^m\sum_{j\in\mathbb{Z}}\mathcal{M}_{T_j}B_{j-s}(x).
\end{eqnarray*}
Therefore, for $x\in \mathbb{R}^n\backslash E^*$,
\begin{eqnarray*}
M_{\lambda/8,\widetilde{\mu}_{\Omega}}^{\mathcal{F}}b(x)\lesssim \Big[ Mb(x)\Big(\sum_{s=1}^m\sum_{j\in\mathbb{Z}}\mathcal{M}_{T_j}B_{j-s}(x)\Big)\Big]^{\frac{1}{2}}
+\mathcal{M}_{\lambda}b(x),
 \end{eqnarray*}
where and in the following,
$$\mathcal{M}_{\lambda}b(x)=\max_{Q\ni x,\,Q\in \mathcal{F}}\Big[\Big(\int^{\infty}_0\Big|\sum_{s=m+1}^{\infty}\sum_{j\in\mathbb{Z}}L_{j,\,t}*\big(B_{j-s}\chi_{\mathbb{R}^n\backslash 3Q}\big)\big|^2\frac{dt}{t}\Big)^{\frac{1}{2}}\Big]^*(\frac{\lambda|Q|}{16})$$
if $x\in\cup_{Q\in\mathcal{F}}Q$ and $M_{\lambda}b(x)=0$ otherwise. Lemma 2.6 in \cite{ler4} tells us that
 $$\|\mathcal{M}_{T_j}h\|_{L^1(\mathbb{R}^n)}\lesssim \|h\|_{L^1(\mathbb{R}^n)}.$$
Therefore,
\begin{eqnarray}\label{eq2.19}&&\big|\big\{x\in\mathbb{R}^n:\,
\Big[Mb(x)\Big(\sum_{s=1}^m\sum_{j\in\mathbb{Z}}\mathcal{M}_{T_j}B_{j-s}(x)\Big)\Big]^{\frac{1}{2}}>\frac{\alpha}{8}\big\}\big|\\
&&\quad\lesssim \big|\big\{x\in\mathbb{R}^n:\,Mb(x)>m^{-\frac{1}{2}}\frac{\alpha}{8}\big\}\big|\nonumber\\
&&\qquad+\big|\big\{x\in\mathbb{R}^n:\,\sum_{s=1}^m\sum_{j\in\mathbb{Z}}\mathcal{M}_{T_j}B_{j-s}(x)
>\frac{m^{\frac{1}{2}}\alpha}{8}\big\}\big|\nonumber\\
&&\quad\lesssim m^{\frac{1}{2}}\alpha^{-1}\|f\|_{L^1(\mathbb{R}^n)}+m^{-\frac{1}{2}}\alpha^{-1}\sum_{s=1}^m\big\|
\sum_{j\in\mathbb{Z}}\mathcal{M}_{T_j}B_{j-s}\big\|_{L^1(\mathbb{R}^n)}\nonumber\\
&&\quad\lesssim m^{\frac{1}{2}}\alpha^{-1}\|f\|_{L^1(\mathbb{R}^n)}.\nonumber
\end{eqnarray}

It remains to estimate $\mathcal{M}_{\lambda}$. Let $\zeta\in C^{\infty}(\mathbb{R})$ such that ${\rm supp}\,\zeta\subset \{r:\,|r|<2^{-10}\}$, and $\int\zeta(r)dr=1$. Set
$$K_{j,\,s;\,t}(x)=\chi_{(1,\,2]}(2^{-j}t)\frac{\Omega(x)}{|x|^{n}}v^s(\frac{|x|}{t}\big),
$$
with
$$v^s(r)=\int_{\mathbb{R}}h(r-u)2^{\beta s}\zeta(2^{\beta s}u)du,
$$
$\beta$ is a small constant which will be chosen later. By \cite[Lemma 3 and Lemma 4]{fans}, we know that there exists functions $\{M_{j,\,s;\,t}\}$ such that for some $\delta\in (0,\,1)$,
\begin{eqnarray*}
\Big\|\Big(\int^{\infty}_0\Big|\sum_{j}M_{j,\,s;\,t}*B_{j-s}\Big|^2\frac{dt}{t}\Big)^{\frac{1}{2}}\Big\|_{L^2(\mathbb{R}^n)}\lesssim 2^{-\delta s}\lambda^{-1}\alpha\sum_{Q}\|b_Q\|_{L^1(\mathbb{R}^n)},
\end{eqnarray*}
and for any $N\in\mathbb{N}$ and $\varepsilon\in (0,\,1)$.
\begin{eqnarray*}
&&\Big\|\Big(\int^{\infty}_0\Big|\sum_{j}\big(K_{j,\,s;\,t}-M_{j,\,s;\,t}\big)*B_{j-s}\Big|^2\frac{dt}{t}\Big)^{\frac{1}{2}}\Big\|_{L^1(\mathbb{R}^n)}\\
&&\quad\lesssim 2^{\beta s N}\big(2^{-\delta s}+2^{\beta sN}2^{(n+(\varepsilon+\delta-1)N\big)s}
\sum_{Q}\|b_Q\|_{L^1(\mathbb{R}^n)},\nonumber
\end{eqnarray*}
If we choose $\beta$ small enough, we then deduce from the last  two inequalities  that for a constant $\gamma\in(0,\,1)$
\begin{eqnarray}\label{eq2.20}
&&\Big|\big\{x\in\mathbb{R}^n:\, \Big(\int^{\infty}_0\Big|\sum_{s=m+1}^{\infty}\sum_{j\in\mathbb{Z}}K_{j,\,s;\,t}*B_{j-s}(x)\Big|^2\frac{dt}{t}
\Big)^{\frac{1}{2}}>\frac{\alpha}{32}\Big\}\Big|\\
&&\quad\lesssim\alpha^{-1}\lambda^{-1}\sum_{s=m+1}^{\infty}2^{-\gamma s}\|f\|_{L^1(\mathbb{R}^n)}\nonumber
\end{eqnarray}
Also, it was proved in \cite[p. 276]{fans} that
\begin{eqnarray}
&&\Big|\big\{x\in\mathbb{R}^n:\, \Big(\int^{\infty}_0\Big|\sum_{s=m+1}^{\infty}\sum_{j\in\mathbb{Z}}(L_{j,\,t}-K_{j,\,s;\,t}\big)*B_{j-s}(x)\Big|^2\frac{dt}{t}
\Big)^{\frac{1}{2}}>\frac{\alpha}{32}\Big\}\Big|\\
&&\quad\lesssim\alpha^{-1}\sum_{s=m+1}^{\infty}2^{-\beta s}\|f\|_{L^1(\mathbb{R}^n)}.\nonumber
\end{eqnarray}
This, along with (\ref{eq2.20}), implies that for a constant $\varrho>0$
\begin{eqnarray}\label{eq2.22}
&&\Big|\big\{x\in\mathbb{R}^n:\, \Big(\int^{\infty}_0\Big|\sum_{s=m+1}^{\infty}\sum_{j\in\mathbb{Z}}L_{j,\,t}*B_{j-s}(x)\Big|^2\frac{dt}{t}
\Big)^{\frac{1}{2}}>\frac{\alpha}{16}\Big\}\Big|\\
&&\qquad\lesssim\alpha^{-1}\lambda^{-1}2^{-\varrho m}\|f\|_{L^1(\mathbb{R}^n)}.\nonumber
\end{eqnarray}

We can now conclude the proof of (\ref{eq2.18}). As in \cite[Section 2]{ler5}, we can write
$$\big|\{x\in\mathbb{R}^n:\, \mathcal{M}_{\lambda}b(x)>\frac{\alpha}{8}\}\big|\leq \sum_{i\in\Lambda_1}|Q_i|+\sum_{j\in\Lambda_2}|I_j|,
$$
where for each $i\in\Lambda_1$,
$$|Q_i|<\frac{16}{\lambda}\Big|\Big\{x\in Q_i:\,\Big(\int^{\infty}_0\Big|\sum_{s=m+1}^{\infty}\sum_{j\in\mathbb{Z}}L_{j,\,t}*B_{j-s}(x)\Big|^2\frac{dt}{t}
\Big)^{\frac{1}{2}}>\frac{\alpha}{16}\Big\}\Big|,
$$
while for each $i\in \Lambda_2$,
$$|Q_i|<\frac{16}{\lambda}\Big|\Big\{x\in Q_i:\,\Big(\int^{\infty}_0\Big|\sum_{s=m+1}^{\infty}\sum_{j\in\mathbb{Z}}L_{j,\,t}*(B_{j-s}\chi_{3Q_i})(x)\Big|^2\frac{dt}{t}
\Big)^{\frac{1}{2}}>\frac{\alpha}{16}\Big\}\Big|,
$$
It follows from estimate (\ref{eq2.22}) that
\begin{eqnarray}\label{eq2.23}
\sum_{i\in\Lambda_1}|Q_i|\lesssim \alpha^{-1}\lambda^{-2}2^{-\varrho m}\|f\|_{L^1(\mathbb{R}^n)}.
\end{eqnarray}
On the other hand, as it was pointed out in \cite{ler5}, we have that for each fixed $j\in \Lambda_2$,
\begin{eqnarray*}|I_j|&\leq &\frac{16}{\lambda}\Big|\Big\{x\in I_j:\,\Big(\int^{\infty}_0\Big|\sum_{s=m+1}^{\infty}\sum_{j\in\mathbb{Z}}L_{j,\,t}*B^i_{j-s}(x)\Big|^2\frac{dt}{t}
\Big)^{\frac{1}{2}}>\frac{\alpha}{16}\Big\}\Big|\\
&\lesssim&\alpha^{-1}\lambda^{-2}2^{-\varrho m}\int_{3I_j}|f(x)|dx,\nonumber
\end{eqnarray*}
if $m\geq N_0$, with $N_0$ a large positive integer depending only $n$. Therefore, for a positive constant $C_n$, we have that
\begin{eqnarray*}
\sum_{j\in\Lambda_2}|I_j|\leq|\{x\in\mathbb{R}^n:\,Mf(x)>C_n\alpha^{}\lambda^{2}2^{\varrho m}\}|\lesssim \alpha^{-1}\lambda^{-2}2^{-\varrho m}\int_{\mathbb{R}^n}|f(x)|dx,
\end{eqnarray*}
which, along with (\ref{eq2.23}), implies that
\begin{eqnarray}\label{eq2.24}\big|\{x\in\mathbb{R}^n:\, \mathcal{M}_{\lambda}b(x)>\frac{\alpha}{8}\}\big|\lesssim\alpha^{-1}\lambda^{-2}2^{-\varrho m}\|f\|_{L^1(\mathbb{R}^n)}.
\end{eqnarray}
Take $m\in\mathbb{N}$ such that  $2N_0\big(\log(\frac{1}{\lambda})+1\big)\leq m\varrho<2N_0\big(\log(\frac{1}{\lambda})+1\big)+1$. (\ref{eq2.18}) now follows from (\ref{eq2.19}) and (\ref{eq2.24}) directly.
\end{proof}
Let $T$ be an operator and $r\in [1,\,\,\infty)$. Define the maximal operator $\mathscr{M}_{r,\,T}$ by
$$\mathscr{M}_{r,\,T}f(x)=\sup_{Q\ni x}\Big(\frac{1}{|Q|}\int_Q|T(f\chi_{\mathbb{R}^d\backslash 3Q})(\xi)|^rd\xi\Big)^{1/r}.$$
$\mathscr{M}_{r,\,T}$ was introduced by Lerner \cite{ler4} and is useful in establishing bilinear sparse domination of rough operator $T_{\Omega}$.
By Lemma 3.3 in \cite{ler4},  Theorem \ref{dingli2.1} implies  that for $p\in (1,\,\infty)$,
$$\|\mathscr{M}_{p,\,\widetilde{\mu}_{\Omega}}f\|_{L^{1,\,\infty}(\mathbb{R}^n)}\lesssim \|\Omega\|_{L^{\infty}(S^{n-1})}p^{\frac{1}{2}}\|f\|_{L^1(\mathbb{R}^n)}.
$$
Now we define the maximal operator $\mathscr{M}_{r,\,\widetilde{\mu}_{\Omega}}^* $ by $$\mathscr{M}_{r,\,\widetilde{\mu}_{\Omega}}^*f(x)=\sup_{Q\ni x}\Big(\frac{1}{|Q|}\int_{Q}\Big(\int^{\infty}_{\ell(Q)}|\widetilde{F}_tf(\xi)|^2\frac{dt}{t^3}\Big)^{\frac{r}{2}}d\xi\Big)^{\frac{1}{r}},$$
A trivial computation yields that
\begin{eqnarray*}
\int^{\infty}_{\ell(Q)}|\widetilde{F}_tf(\xi)|^2\frac{dt}{t^3}&=&
\int^{\infty}_{6n\ell(Q)}|\widetilde{F}_tf(\xi)|^2\frac{dt}{t^3}\\
&&+\int^{6n\ell(Q)}_{\ell(Q)}|\widetilde{F}_tf(\xi)|^2\frac{dt}{t^3}\\
&\leq &\int^{\infty}_{6n\ell(Q)}|\widetilde{F}_t(f\chi_{\mathbb{R}^n\backslash 3Q})(\xi)|^2\frac{dt}{t^3}+\inf_ {y\in Q}Mf(y).
\end{eqnarray*}
We then have the follwing conclusion.
\begin{corollary}\label{c2.1}Let $\Omega$ be homogeneous of degree zero and have mean value zero.
Suppose that $\Omega\in L^{\infty}(S^{n-1})$. Then for $r\in (1,\,\infty)$,
$$\|\mathscr{M}_{r,\,\widetilde{\mu}_{\Omega}}^*f\|_{L^{1,\,\infty}(\mathbb{R}^n)}\lesssim r^{\frac{1}{2}}\|\Omega\|_{L^{\infty}(S^{n-1})}\|f\|_{L^{1}(\mathbb{R}^n)}.$$
\end{corollary}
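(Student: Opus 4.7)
My plan is to deduce Corollary~\ref{c2.1} directly from Theorem~\ref{dingli2.1} (which, via Lemma~3.3 of \cite{ler4}, already yields the advertised weak-$(1,1)$ bound $\|\mathscr{M}_{r,\widetilde{\mu}_\Omega}f\|_{L^{1,\infty}}\lesssim r^{1/2}\|\Omega\|_{L^\infty(S^{n-1})}\|f\|_{L^1}$) by showing that $\mathscr{M}_{r,\widetilde{\mu}_\Omega}^{*}$ is dominated pointwise by $\mathscr{M}_{r,\widetilde{\mu}_\Omega}$ plus a multiple of the Hardy--Littlewood maximal operator $M$. The pointwise reduction in the display just before the statement of the corollary is essentially the whole ingredient; what is left is just to package it.

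First, I would fix a cube $Q\ni x$ and $\xi\in Q$. The geometric observation is that if $t>6n\ell(Q)$, then for $\xi\in Q$ the annulus $t/2<|\xi-y|\le t$ lies entirely outside $3Q$, so $\widetilde{F}_t f(\xi)=\widetilde{F}_t(f\chi_{\mathbb{R}^n\setminus 3Q})(\xi)$. On the complementary range $\ell(Q)\le t\le 6n\ell(Q)$ one estimates crudely
\[
|\widetilde{F}_tf(\xi)|\lesssim \|\Omega\|_{L^\infty(S^{n-1})}\,t\cdot Mf(\xi),
\]
and integrating $|\widetilde{F}_tf(\xi)|^2\,dt/t^3$ over that bounded logarithmic range produces a constant times $\|\Omega\|_{L^\infty(S^{n-1})}^2(Mf(\xi))^2$; since for $\xi\in Q$ the truncated maximal average is comparable to $\inf_{y\in Q}Mf(y)$ by a standard engulfing of balls, this bound is in fact $\lesssim \|\Omega\|_{L^\infty(S^{n-1})}^2(\inf_{y\in Q}Mf(y))^2$, consistent with the decomposition displayed above the corollary.

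Taking square roots, applying the triangle inequality, and bounding the tail integral by $\widetilde{\mu}_\Omega(f\chi_{\mathbb{R}^n\setminus 3Q})(\xi)$, I obtain
\[
\Bigl(\int_{\ell(Q)}^\infty|\widetilde{F}_tf(\xi)|^2\tfrac{dt}{t^3}\Bigr)^{1/2}\lesssim \widetilde{\mu}_\Omega(f\chi_{\mathbb{R}^n\setminus 3Q})(\xi)+\|\Omega\|_{L^\infty(S^{n-1})}\inf_{y\in Q}Mf(y).
\]
Taking the $L^r(Q,d\xi/|Q|)$-norm, using Minkowski's inequality, and then the supremum over cubes $Q\ni x$ yields the pointwise bound
\[
\mathscr{M}_{r,\widetilde{\mu}_\Omega}^{*}f(x)\lesssim \mathscr{M}_{r,\widetilde{\mu}_\Omega}f(x)+\|\Omega\|_{L^\infty(S^{n-1})}Mf(x),
\]
where I used $\inf_{y\in Q}Mf(y)\le Mf(x)$ since $x\in Q$.

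Finally, I combine the weak-$(1,1)$ bound for $\mathscr{M}_{r,\widetilde{\mu}_\Omega}$ (with constant $r^{1/2}\|\Omega\|_{L^\infty(S^{n-1})}$, quoted from Theorem~\ref{dingli2.1} via Lemma 3.3 of \cite{ler4}) with the classical weak-$(1,1)$ bound for $M$ to conclude $\|\mathscr{M}_{r,\widetilde{\mu}_\Omega}^{*}f\|_{L^{1,\infty}(\mathbb{R}^n)}\lesssim r^{1/2}\|\Omega\|_{L^\infty(S^{n-1})}\|f\|_{L^1(\mathbb{R}^n)}$. The only non-mechanical point is the short-time bound on $\int_{\ell(Q)}^{6n\ell(Q)}$; that is straightforward, so I anticipate no real obstacle—this corollary is a bookkeeping consequence of the already-proved Theorem~\ref{dingli2.1}.
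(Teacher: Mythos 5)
Your proposal is correct and follows essentially the same route as the paper: the split of $\int_{\ell(Q)}^\infty$ at $t=6n\ell(Q)$ displayed just before the corollary gives the pointwise bound $\mathscr{M}^*_{r,\widetilde{\mu}_\Omega}f \lesssim \mathscr{M}_{r,\widetilde{\mu}_\Omega}f + \|\Omega\|_{L^\infty(S^{n-1})}Mf$, and the stated weak $(1,1)$ bound then follows from Theorem~\ref{dingli2.1} together with Lemma 3.3 of \cite{ler4} and the classical weak $(1,1)$ estimate for $M$. You have merely filled in the routine details the paper omits (including the correct squared form $(\inf_{y\in Q}Mf(y))^2$ for the short-time piece, where the paper's display appears to have a small typo).
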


{\it Proof of Theorem \ref{dingli1.3}}. We employ the ideas used in \cite{ler5} and assume that $\|\Omega\|_{L^{\infty}(S^{n-1})}=1$. By (\ref{eq2.4}), it suffices to prove (\ref{eq:1.3}) with $\mu_{\Omega}$ replaced by $\widetilde{\mu}_{\Omega}$. For simplicity, we only prove (\ref{eq:1.3}) for the case $s=2$. The case $s\in [1,\,2)$ can be proved in the same way. For a fixed $r\in (1,\,2)$ and cube $Q_0$, define $\mathscr{M}_{r',\,\widetilde{\mu}_{\Omega};\,Q_0}$, the  local analogy of $\mathscr{M}_{r',\,\widetilde{\mu}_{\Omega}}$, as
$$ \mathscr{M}_{r',\,\widetilde{\mu}_{\Omega};\,Q_0}^*f(x)=\sup_{Q\ni x,\, Q\subset Q_0}\Big(\frac{1}{|Q|}\int_{Q}\Big(\int^{\infty}_{\ell(Q)}|\widetilde{F}_t(f\chi_{3Q_0})(\xi)|^2\frac{dt}{t^3}
\Big)^{\frac{r}{2}}d\xi\Big)^{\frac{1}{r}}.$$
Let $E=\cup_{j=1}^3E_j$ with
$$E_1=\big\{x\in Q_0:\, \widetilde{\mu}_{\Omega}(f\chi_{3Q_0})(x)|>D\langle |f|\rangle_{3Q_0}\big\},$$
$$E_3=\{x\in Q_0:\,\mathscr{M}^*_{r',\,\widetilde{\mu}_{\Omega};\,Q_0}(f\chi_{3Q_0})(x)>Dr'^{\frac{1}{2}}\langle |f|\rangle_{3Q_0}\},$$
where $D$  is a positive constant to be determined. If we choose $D$ large enough, it then follows from Corollary \ref{c2.1} and the weak type (1,\,1) boundedness of $\mu_{\Omega}$ (see \cite{fans}) that
$$|E|\le \frac{1}{2^{d+2}}|Q_0|.$$
Now on the cube $Q_0$, applying the Calder\'on-Zygmund decomposition to $\chi_{E}$ at level $\frac{1}{2^{d+1}}$,  we obtain pairwise disjoint cubes $\{P_j\}\subset \mathcal{D}(Q_0)$, such that
$$\frac{1}{2^{d+1}}|P_j|\leq |P_j\cap E|\leq \frac{1}{2}|P_j|$$
and $|E\backslash\cup_jP_j|=0$.  Observe that $\sum_j|P_j|\leq \frac{1}{2}|Q_0|$.
Write
\begin{equation*}
\int_{Q_0}|g(x)|\big[\widetilde{\mu}_{\Omega}(f\chi_{3Q_0})(x)\big]^2dx=\sum_{i=1}^2J_i,
\end{equation*}
where
$$
J_1=\int_{Q_0\backslash \cup_jP_j}|g(x)|\big[\widetilde{\mu}_{\Omega}(f\chi_{3Q_0})(x)\big]^2dx,\,\,
J_2=\sum_l\int_{P_l}|g(x)|\big[\widetilde{\mu}_{\Omega}(f\chi_{3Q_0})(x)\big]^2dx.
$$
The facts that $|E\backslash\cup_jP_j|=0$ implies that
$$|{\rm J}_1|\lesssim \langle|f|\rangle_{3Q_0}^2\langle |g|\rangle_{Q_0}|Q_0|.$$
To estimate $J_2$, write
\begin{eqnarray*}
\int_{P_l}|g(x)|\big[\widetilde{\mu}_{\Omega}(f\chi_{3Q_0})(x)\big]^2dx
&=&\int_{P_l}|g(x)|\int_{0}^{\ell(P_l)}|\widetilde{F}_{t}(f\chi_{3Q_0})(x)|^2\frac{dt}{t^3}dx\\
&&+\int_{P_l}|g(x)|\int_{\ell(P_l)}^{\infty}|\widetilde{F}_{t}(f\chi_{3Q_0})(x)|^2\frac{dt}{t^3}dx\\
&:=&J_{21}^l+J^l_{22}.
\end{eqnarray*}
It is obvious that
$$J^l_{21}=\int_{P_l}|g(x)|\int_{0}^{\ell(P_l)}|\widetilde{F}_{t}(f\chi_{3P_l})(x)|^2\frac{dt}{t^3}dx\leq\int_{P_l}|g(x)|\big[\widetilde{\mu}_{\Omega}(f\chi_{3P_l})(x)\big]^2dx.$$
The fact  that $P_l\cap E^c\not =\emptyset$ tells us that
\begin{eqnarray*}|J_{22}^l|&=&\int_{P_l}|g(x)|\int_{\ell(P_l)}^{\infty}|\widetilde{F}_{t}(f\chi_{3Q_0})(x)|^2\frac{dt}{t^3}dx\\
&\leq &\Big(\int_{P_l}|g(x)|^rdx\Big)^{\frac{1}{r}}\Big(\int_{P_l}\Big(\int^{\infty}_{\ell(P_l)}|\widetilde{F}_{t}(f\chi_{3Q_0})(x)|^2\frac{dt}{t^3}\Big)^{r'}\Big)^{\frac{1}{r'}}\\
&\leq&|P_l|\langle |g|\rangle_{P_l,\,r}\inf_{y\in P_l}\big(\mathscr{M}^*_{2r',\,\widetilde{\mu}_{\Omega};\,Q_0}f(y)\big)^2,
\end{eqnarray*}
and so
$$\sum|J_{22}^l|\leq r'\langle |f|\rangle_{3Q_0}^2\sum_l|P_l|\langle |g|\rangle_{P_l,\,r}\leq r'\langle |f|\rangle_{3Q_0}^2 |g|\rangle_{Q_0,\,r}|Q_0|.
$$
Combining  estimates for $J_1$, $J_{21}^l$ and $J_{22}^l$ shows that
\begin{eqnarray}\label{eq:local1}
\int_{Q_0}|g(x)|\big[\widetilde{\mu}_{\Omega}(f\chi_{3Q_0})(x)\big]^2dx&\leq& Cr'\langle |f|\rangle_{3Q_0}^2 |g|\rangle_{Q_0,\,r}|Q_0|\\
&&+
\sum_l\int_{P_l}|g(x)|\big[\widetilde{\mu}_{\Omega}(f\chi_{3P_l})(x)\big]^2dx.\nonumber
\end{eqnarray}

Recall that  $\sum_j|P_j|\leq \frac{1}{2}|Q_0|$. Iterating   estimate (\ref{eq:local1}), we obtain that there exists a $\frac{1}{2}$-sparse family of cubes $\mathcal{F}\subset \mathcal{D}(Q_0)$, such that
$$\int_{Q_0}|g(x)|\big[\widetilde{\mu}_{\Omega}(f\chi_{3Q_0})(x)\big]^2dx\lesssim r'\sum_{Q\in\mathcal{F}}\langle|f|\rangle_{3Q}^2\langle |g|\rangle_{Q,\,r}|Q|;$$
see also \cite{ler4,ler5}.

We can now conclude the proof of Theorem \ref{dingli1.3}. In fact, as in \cite{ler5}, we decompose $\mathbb{R}^n$ by cubes $\{R_l\}$, such that ${\rm supp}f\subset 9R_l$ for each $l$, and $R_l$'s have disjoint interiors.
Then for each $l$, we have a $\frac{1}{2}$-sparse family of cubes $\mathcal{F}_l\subset \mathcal{D}(R_l)$, such that
\begin{eqnarray*}
\int_{R_l}|g(x)|\big[\widetilde{\mu}_{\Omega}(f\chi_{3R_l})(x)\big]^2dx\leq Cr'\sum_{Q\in\mathcal{F}_l}\langle |f|\rangle_{3Q}^2 \langle|g|\rangle_{Q,\,r}|Q|.
\end{eqnarray*}
Let $\mathcal{S}=\cup_l\{3Q:\, Q\in \mathcal{F}_l\}$. Summing over the last inequality yields our desired conclusion.\qed
\section{Proof of Theorem \ref{dingli1.4} and Theorem \ref{dingli1.5}}
Let $\mathcal{S}$ be a sparse family of cubes. Associated with  the sparse family $\mathcal{S}$ and $r\in (0,\,\infty)$, we define the sparse operator
$\mathcal{A}_{\mathcal{S}}^{r}$  by
$$\mathcal{A}_{\mathcal{S}}^{r}f(x)=\Big\{\sum_{Q\in\mathcal{S}}\big(\langle |f|\rangle_{Q}\big)^r\chi_{Q}(x)\Big\}^{1/r}.$$

\begin{lemma} \label{lem3.2}Let $r\in (0,\,\infty)$,  and $\mathcal{S}$ be a sparse family of cubes. Then
\begin{itemize}
\item[\rm (i)] for $p\in (1,\,\infty)$ and $w\in A_{p}(\mathbb{R}^n)$,$$\|\mathcal{A}_{\mathcal{S}}^{r}f\|_{L^p(\mathbb{R}^n,\,w)}\lesssim [w]_{A_p}^{\frac{1}{p}}
\big([w]_{A_{\infty}}^{(\frac{1}{r}-\frac{1}{p})_+}+[w^{1-p'}]_{A_{\infty}}^{\frac{1}{p}}\big)
\|f\|_{L^{p}(\mathbb{R}^n,\,w)};$$
\item[\rm (ii)] for $p\in (r,\,\infty)$ and $w\in A_{p}(\mathbb{R}^n)$,
\begin{eqnarray}&&\sum_{Q\in\mathcal{S}}\langle |f|w^{1-p'}\rangle_{Q}^r\langle|g|w\rangle_{Q}|Q|\\
&&\quad\lesssim [w]_{A_p}^{\frac{r}{p}}\big([w]_{A_{\infty}}^{1-\frac{r}{p}}+[w^{1-p'}]_{A_{\infty}}^{\frac{r}{p}}\big)
\|f\|_{L^p(\mathbb{R}^n,\,w^{1-p'})}^r\|g\|_{L^{(p/r)'}(\mathbb{R}^n,\,w)}.\nonumber\end{eqnarray}
\end{itemize}
\end{lemma}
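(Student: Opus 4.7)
The plan is to prove (ii) as the technical core and then derive (i) from it by duality combined with a monotonicity argument.

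For (ii), set $\sigma = w^{1-p'}$ and normalize so that $\|f\|_{L^p(\sigma)} = \|g\|_{L^{(p/r)'}(w)} = 1$. I would use a principal-cube / stopping-time construction $\mathcal{P}\subset\mathcal{S}$ adapted to the $\sigma$-averages of $|f|$: include $Q\in\mathcal{S}$ in $\mathcal{P}$ whenever $\langle|f|\sigma\rangle_Q/\langle\sigma\rangle_Q$ exceeds twice the corresponding quantity at the most recent principal ancestor. Then $\mathcal{P}$ is $\sigma$-Carleson and for each $Q\in\mathcal{S}$ with principal ancestor $F=\pi_\mathcal{P}(Q)$,
\[
\frac{\langle|f|\sigma\rangle_Q}{\langle\sigma\rangle_Q}\le 2\frac{\langle|f|\sigma\rangle_F}{\langle\sigma\rangle_F}.
\]
Grouping the sum in (ii) by principal ancestor and invoking the $A_p$ relation in the split form $\langle\sigma\rangle_Q^r\langle w\rangle_Q \le [w]_{A_p}^{r/p}\langle\sigma\rangle_Q^{r/p}\langle w\rangle_Q^{1-r/p}$, the estimate reduces to controlling $[w]_{A_p}^{r/p}\sum_F(\langle|f|\sigma\rangle_F/\langle\sigma\rangle_F)^r\,\mathcal{I}(F)$, where $\mathcal{I}(F)$ is an inner sum involving $\sigma(Q)^{r/p}w(Q)^{1-r/p}\langle|g|w\rangle_Q$. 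Two parallel applications of sparseness — once using $\sigma(Q)\lesssim[\sigma]_{A_\infty}\sigma(E_Q)$ and once using $w(Q)\lesssim[w]_{A_\infty}w(E_Q)$ — together with H\"older's inequality with exponents $p/r$ and $(p/r)'$ and the $L^{(p/r)'}(w)$- and $L^p(\sigma)$-boundedness of the dyadic $w$- and $\sigma$-maximal operators produce the desired bound, with the additive factor $[w]_{A_\infty}^{1-r/p}+[\sigma]_{A_\infty}^{r/p}$ arising from comparing the two routes.

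To derive (i), in the range $p>r$ apply $L^{p/r}(w)$-duality to write
\[
\|\mathcal{A}_\mathcal{S}^r f\|_{L^p(w)}^r=\sup_{h\ge 0,\,\|h\|_{L^{(p/r)'}(w)}=1}\sum_{Q\in\mathcal{S}}\langle|f|\rangle_Q^r\langle h w\rangle_Q|Q|
\]
and substitute $\tilde f=f w^{p'-1}$, so that $\langle|f|\rangle_Q = \langle|\tilde f|\sigma\rangle_Q$ and, using the identity $pp'=p+p'$, $\|\tilde f\|_{L^p(\sigma)}=\|f\|_{L^p(w)}$. Applying (ii) to $(\tilde f,h)$ and extracting the $r$-th root via $(A+B)^{1/r}\lesssim A^{1/r}+B^{1/r}$ recovers (i) with exponents $[w]_{A_\infty}^{1/r-1/p}$ and $[\sigma]_{A_\infty}^{1/p}$. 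In the complementary range $p\le r$, where $(1/r-1/p)_+=0$, exploit the pointwise monotonicity $\mathcal{A}_\mathcal{S}^r f\le\mathcal{A}_\mathcal{S}^p f$ (because $\ell^r$-norms decrease in $r$) to reduce to $\|\mathcal{A}_\mathcal{S}^p f\|_{L^p(w)}^p=\sum_Q\langle|f|\rangle_Q^p w(Q)$; H\"older's inequality (using $w^{-p'/p}=\sigma$) and the $A_p$ condition give $\langle|f|\rangle_Q^p w(Q)\le[w]_{A_p}\int_Q|f|^p w$, and a $\sigma$-sparse rearrangement based on Wilson-type $A_\infty$ of $\sigma$ absorbs the multiple counting at the cost of $[\sigma]_{A_\infty}^{1/p}$.

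The main obstacle is producing the \emph{additive} combination $[w]_{A_\infty}^{1-r/p}+[\sigma]_{A_\infty}^{r/p}$ in (ii) rather than the multiplicative product $[w]_{A_\infty}^{1-r/p}[\sigma]_{A_\infty}^{r/p}$ that a single pass of H\"older would yield. Obtaining the split requires estimating the inner Carleson sum $\mathcal{I}(F)$ along two complementary routes — allocating the sparseness entirely to $\sigma$ in one, entirely to $w$ in the other — and then combining the two bounds. Keeping the principal-cube decomposition compatible with both routes is the principal bookkeeping burden.
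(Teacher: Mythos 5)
A point of context first: the paper does not actually prove Lemma~3.2, it simply cites Lacey and Li \cite{lali} for both parts, so you are reconstructing a proof rather than matching one written out here. Your reduction of (i) to (ii) for $p>r$ --- duality in $L^{p/r}(w)$, the substitution $\tilde f=fw^{p'-1}$ with $\langle|\tilde f|\sigma\rangle_Q=\langle|f|\rangle_Q$ and $\|\tilde f\|_{L^p(\sigma)}=\|f\|_{L^p(w)}$ --- is the standard one and is correct. Your treatment of $p\le r$ has the right ingredients, but as written the final step does not close: from $\langle|f|\rangle_Q^p\,w(Q)\le[w]_{A_p}\int_Q|f|^pw$ there is no way to ``absorb the multiple counting,'' because $\sum_Q\int_Q|f|^pw=\int|f|^pw\sum_Q\chi_Q$ and the overlap count of a sparse family is unbounded. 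The factorisation that works is $\langle|f|\rangle_Q^pw(Q)=\bigl(E_Q^\sigma(|f|/\sigma)\bigr)^p\langle\sigma\rangle_Q^pw(Q)\le[w]_{A_p}\bigl(E_Q^\sigma(|f|/\sigma)\bigr)^p\sigma(Q)$, followed by the $\sigma$-weighted Carleson embedding theorem, and only then does $[\sigma]_{A_\infty}$ appear to the correct power.

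The genuine gap is in (ii), and you partly acknowledge it yourself. After the $A_p$ split you face $\mathcal{I}(F)=\sum_{\pi(Q)=F}\langle|g|\rangle_Q^w\,\sigma(Q)^{r/p}w(Q)^{1-r/p}$, and a single H\"older with exponents $p/r$ and $(p/r)'$ forces you to pay a Carleson embedding on each side: $\bigl(\sum\sigma(Q)\bigr)^{r/p}\lesssim[\sigma]_{A_\infty}^{r/p}\sigma(F)^{r/p}$ and $\bigl(\sum(\langle|g|\rangle_Q^w)^{(p/r)'}w(Q)\bigr)^{1-r/p}\lesssim[w]_{A_\infty}^{1-r/p}\|g\|_{L^{(p/r)'}(w)}$, whose product is $[\sigma]_{A_\infty}^{r/p}[w]_{A_\infty}^{1-r/p}$ --- genuinely weaker than the additive bound. ``Allocating the sparseness entirely to $\sigma$ in one route and entirely to $w$ in the other'' does not make the other side free of a constant: overlap must be paid for on both factors, and the only currency is the Carleson constant of the respective weight, so each route alone still yields the product. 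The device that produces the sum $[w]_{A_\infty}^{1-r/p}+[\sigma]_{A_\infty}^{r/p}$ in the Lacey--Li argument is a more delicate two-sided construction, and your outline does not contain it; as written, your (ii) only establishes the multiplicative variant. Finally, the pointwise statements $\sigma(Q)\lesssim[\sigma]_{A_\infty}\sigma(E_Q)$ and $w(Q)\lesssim[w]_{A_\infty}w(E_Q)$ are false: sparseness together with $A_\infty$ yields a lower bound on $w(E_Q)/w(Q)$ that degrades exponentially in $[w]_{A_\infty}$, not polynomially. What is true, and what the argument should invoke, is the summed Carleson estimate $\sum_{Q\in\mathcal{S},\,Q\subset R}w(Q)\lesssim[w]_{A_\infty}w(R)$.
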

Conclusion (i) of Lemma \ref{lem3.2} is a combination of Theorem 2.3 and Theorem 2.13 in \cite{lali}, and conclusion (ii) of Lemma \ref{lem3.2} was proved in the proof of Theorem 2.3 in \cite{lali}.
\begin{lemma}\label{lem3.3} Let $w\in A_{\infty}(\mathbb{R}^n)$. Then for any cube $Q$ and $\delta\in (1,\,1+\frac{1}{2^{11+n}[w]_{A_{\infty}}}]$,
$$\Big(\frac{1}{|Q|}\int_Qw^{\delta}(x)dx\Big)^{\frac{1}{\delta}}\leq \frac{2}{|Q|}\int_{Q}w(x)dx.$$
\end{lemma}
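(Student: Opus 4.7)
\bigskip

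\noindent\textbf{Proof proposal for Lemma \ref{lem3.3}.} This is the Hyt\"onen--P\'erez sharp reverse H\"older inequality for the Fujii--Wilson $A_\infty$ class. My plan is to proceed by an iterative Calder\'on--Zygmund stopping-time construction calibrated to the $A_\infty$ constant, combined with a geometric series estimate.

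First I would reduce to proving the equivalent pointwise-average form
$$\frac{1}{|Q|}\int_Q w^{1+\epsilon}(x)\,dx \le 2^{1+\epsilon}\Big(\frac{w(Q)}{|Q|}\Big)^{1+\epsilon},\qquad \epsilon=\delta-1,$$
after which the stated inequality follows by taking $(1/\delta)$-th roots. Fixing the cube $Q$, I would construct principal cubes: set $\mathcal{P}_0=\{Q\}$, and having defined $\mathcal{P}_k$, let
$$\mathcal{P}_{k+1}^P = \bigl\{P'\subset P \text{ maximal dyadic with } w_{P'} > \lambda\, w_P\bigr\},\qquad \mathcal{P}_{k+1}=\bigcup_{P\in\mathcal{P}_k}\mathcal{P}_{k+1}^P,$$
where $\lambda$ is a stopping threshold to be chosen (proportional to $[w]_{A_\infty}$). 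Writing $E_k=\bigcup\mathcal{P}_k$, the Fujii--Wilson definition applied to each $P\in\mathcal{P}_k$ gives
$$\lambda\, w_P\,|\textstyle\bigcup\mathcal{P}_{k+1}^P| \le \int_P M(w\chi_P)(x)\,dx \le [w]_{A_\infty}\,w(P),$$
so that $|\bigcup\mathcal{P}_{k+1}^P|\le \tfrac{[w]_{A_\infty}}{\lambda}|P|$. Choosing $\lambda$ a suitable multiple of $[w]_{A_\infty}$ produces a fixed geometric ratio $|E_{k+1}|\le \rho\,|E_k|$ with $\rho<1$, hence $|E_k|\le \rho^k|Q|$.

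Next I would exploit the failure of the stopping condition on $E_k\setminus E_{k+1}$. By Lebesgue differentiation, for a.e.\ $x\in E_k\setminus E_{k+1}$ contained in $P\in\mathcal{P}_k$, every dyadic subcube $Q'\subset P$ through $x$ satisfies $w_{Q'}\le\lambda w_P$, so $w(x)\le\lambda w_P$; iterating the Calder\'on--Zygmund bound $w_P\le 2^n\lambda w_{\widehat P}$ gives $w(x)\le\lambda(2^n\lambda)^k w_Q$. Decomposing
$$\int_Q w^{1+\epsilon}\,dx = \sum_{k\ge 0}\int_{E_k\setminus E_{k+1}} w^{1+\epsilon}\,dx \le \sum_{k\ge 0}\bigl(\lambda(2^n\lambda)^k w_Q\bigr)^{\epsilon}\,w(E_k),$$
and bounding $w(E_k)$ via Wilson's equivalent characterization of $A_\infty$ (which yields $w(E_k)/w(Q)\lesssim (|E_k|/|Q|)^{c_n/[w]_{A_\infty}}$), the whole expression collapses to a geometric series with common ratio
$$r = (2^n\lambda)^\epsilon \cdot \rho^{c_n/[w]_{A_\infty}}.$$
Taking $\epsilon\le 1/(2^{11+n}[w]_{A_\infty})$, the exponent $(2^n\lambda)^\epsilon$ is at most a small perturbation of $1$ while $\rho^{c_n/[w]_{A_\infty}}$ dominates, forcing $r\le 1/2$. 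Summing the series and dividing by $|Q|$ yields the factor $2^{1+\epsilon}$ on the right-hand side.

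The main obstacle is the delicate bookkeeping of dimensional constants. One must simultaneously track (i) the measure-decay ratio $\rho$ produced by the $A_\infty$ bound on stopping cubes, (ii) the multiplicative growth $2^n\lambda$ of the $w$-averages along the principal cube tree, and (iii) the polynomial-in-measure decay of $w(E_k)$ inherited from the $A_\infty$ characterization. Balancing these three quantities to produce the explicit threshold $\delta-1\le 1/(2^{11+n}[w]_{A_\infty})$ is what determines the exponent $11+n$; the single most subtle input is the $A_\infty$-to-$A_\infty$ self-improvement used to upgrade the trivial estimate $w(E_k)\le w(Q)$ to the exponential-in-$k$ decay needed for the series to converge with a margin big enough to absorb the geometric growth of the running suprema of $w$.
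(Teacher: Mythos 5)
The paper does not supply its own argument for this lemma; it simply cites Hyt\"onen and P\'erez \cite{hp}, where the sharp reverse H\"older inequality for the Fujii--Wilson $A_\infty$ constant is established. Your sketch is therefore not comparable to ``the paper's proof'' so much as to the Hyt\"onen--P\'erez one, and there it has two genuine gaps.

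First, the crucial input you invoke --- that $w(E_k)/w(Q)\lesssim(|E_k|/|Q|)^{c_n/[w]_{A_\infty}}$ --- is, with this quantitative dependence on $[w]_{A_\infty}$, essentially equivalent to the reverse H\"older inequality you are trying to prove. The standard quantitative derivation of the small-exponent condition from the Fujii--Wilson constant passes through the reverse H\"older estimate (one applies H\"older with the reverse H\"older exponent); I know of no direct route from $\int_Q M(w\chi_Q)\le[w]_{A_\infty}w(Q)$ to the small-exponent condition that does not reprove the lemma. So this step is circular.

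Second, even taking that characterization as a black box, the geometric series does not close at the threshold $\delta-1\le 2^{-(11+n)}/[w]_{A_\infty}$. Write $\tau=[w]_{A_\infty}$, $\epsilon=\delta-1$. With $\lambda\sim\tau$ and $\rho$ a fixed constant in $(0,1)$, your common ratio is
$$r=(2^n\lambda)^{\epsilon}\,\rho^{c_n/\tau}=\exp\bigl(\epsilon\log(c\tau)\bigr)\cdot\exp\bigl(-c_n\log(1/\rho)/\tau\bigr).$$
As $\tau\to\infty$ with $\epsilon\sim 1/\tau$, both exponentials tend to $1$, so $r\to 1$; the claim that $r\le 1/2$ fails. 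In fact $r<1$ forces $\epsilon\lesssim 1/(\tau\log\tau)$, which is strictly weaker than the stated bound $\epsilon\lesssim 1/\tau$. The logarithmic loss is precisely what the Hyt\"onen--P\'erez absorption argument is designed to eliminate. Their proof normalizes $w_{Q_0}=1$, sets $\Omega_t=\{x\in Q_0: M^d_{Q_0}w(x)>t\}$, decomposes $\Omega_t$ into maximal Calder\'on--Zygmund cubes $Q_j^t$, and then applies Fujii--Wilson \emph{inside each} $Q_j^t$ to obtain the recursive estimate
$$\int_t^\infty|\Omega_s|\,ds=\int_{\Omega_t}(M^d_{Q_0}w-t)\,dx\le\sum_j\int_{Q_j^t}M^d(w\chi_{Q_j^t})\le\tau\,w(\Omega_t).$$
Setting $Y=\epsilon\int_1^\infty t^{\epsilon-1}w(\Omega_t)\,dt$ and using $w(\Omega_t)\le 2^n t|\Omega_t|$ plus integration by parts, one arrives at $Y\le 2^n\epsilon\tau+2^n\epsilon\tau\,Y$; if $2^n\epsilon\tau\le\tfrac12$ this absorbs to $Y\le 1$, whence $\frac{1}{|Q_0|}\int_{Q_0}w^{1+\epsilon}\le 1+Y\le 2$. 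This self-absorption, rather than a geometric sum against a small-exponent bound, is what yields the sharp linear-in-$1/\tau$ window for $\epsilon$ and is the ingredient missing from your sketch.
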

Lemma \ref{lem3.2} was proved in \cite{hp}.
\begin{lemma}\label{lem3.4}Let  $\alpha,\,\beta\in \mathbb{N}\cup\{0\}$ and $U$ be a  sublinear operator. Suppose that for any  $r\in (1,\,2)$, and a bounded function $f$ with compact support, there exists a sparse family of cubes $\mathcal{S}$, such that for any function $g\in L^1(\mathbb{R}^d)$,
\begin{eqnarray}\label{en.sparse1}\Big|\int_{\mathbb{R}^d}Uf(x)g(x)dx\Big|\leq r'^{\alpha}\mathcal{A}_{\mathcal{S};\,L(\log L)^{\beta},\,L^r}(f,\,g).\end{eqnarray}
Then for any $w\in A_1(\mathbb{R}^d)$ and bounded function $f$ with compact support,
\begin{equation*}
\begin{split}
w(\{&x\in\mathbb{R}^d:\, |Uf(x)|>\lambda\})\\
&\quad\lesssim [w]_{A_{\infty}}^\alpha\log^{1+\beta}({\rm e}+[w]_{A_{\infty}})[w]_{A_1}\int_{\mathbb{R}^d}\frac{|f(x)|}{\lambda}\log ^{\beta}\Big({\rm e}+\frac{|f(x)|}{\lambda}\Big)w(x)dx.
\end{split}
\end{equation*}
\end{lemma}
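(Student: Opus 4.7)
After normalizing $\lambda=1$ and writing $\Omega_{1}=\{|Uf|>1\}$, truncate so $w(\Omega_{1})<\infty$ and use duality: with $g=\mathrm{sgn}(Uf)\chi_{\Omega_{1}}w\in L^{1}(\mathbb{R}^{d})$,
$$w(\Omega_{1})\le\int_{\Omega_{1}}|Uf|\,w\,dx=\int_{\mathbb{R}^{d}} Uf\cdot g\,dx.$$
Applying the hypothesis to this particular $g$ yields a sparse family $\mathcal{S}=\mathcal{S}(f,r)$ with
$$w(\Omega_{1})\le r'^{\alpha}\sum_{Q\in\mathcal{S}}\|f\|_{L(\log L)^{\beta},Q}\,\langle g\rangle_{Q,r}\,|Q|.$$

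The key calibration is to choose $r-1=\tfrac{1}{2^{11+n}[w]_{A_{\infty}}}$, so that $r'\asymp [w]_{A_{\infty}}$ and Lemma \ref{lem3.3} is applicable. The pointwise bound $0\le g\le w$ together with the reverse H\"older estimate yields $\langle g\rangle_{Q,r}\le 2\langle w\rangle_{Q}$. Combining the $A_{1}$ condition $\langle w\rangle_{Q}\le [w]_{A_{1}}\mathrm{ess\,inf}_{Q}w$ with the sparseness $|Q|\le 2|E_{Q}|$ gives $\langle g\rangle_{Q,r}\,|Q|\le 4[w]_{A_{1}}\,w(E_{Q})$, and therefore
$$w(\Omega_{1})\lesssim [w]_{A_{\infty}}^{\alpha}\,[w]_{A_{1}}\sum_{Q\in\mathcal{S}}\|f\|_{L(\log L)^{\beta},Q}\,w(E_{Q}).$$

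The disjointness of $\{E_{Q}\}$ together with the pointwise bound $\|f\|_{L(\log L)^{\beta},Q}\le M_{L(\log L)^{\beta}}^{\mathscr{D}}f(x)$ for $x\in E_{Q}\subset Q$ collapses the remaining sum:
$$\sum_{Q}\|f\|_{L(\log L)^{\beta},Q}\,w(E_{Q})\le \int_{\mathbb{R}^{d}} M_{L(\log L)^{\beta}}^{\mathscr{D}}f\cdot w\,dx.$$
The proof then closes via the (weighted) Orlicz--Fefferman--Stein inequality
$$\int_{\mathbb{R}^{d}} M_{L(\log L)^{\beta}}^{\mathscr{D}}f\cdot w\,dx\lesssim \log^{1+\beta}\!\big(\mathrm{e}+[w]_{A_{\infty}}\big)\int_{\mathbb{R}^{d}}|f|\log^{\beta}(\mathrm{e}+|f|)\,w\,dx,$$
obtained by choosing $p_{0}-1\simeq 1/\log(\mathrm{e}+[w]_{A_{\infty}})$ in the $A_{\infty}$-sharp bound $\|M\|_{L^{p_{0}}(w)\to L^{p_{0}}(w)}\lesssim p_{0}'\,[w]_{A_{\infty}}^{1/p_{0}}$, iterating via the pointwise equivalence $M_{L(\log L)^{\beta}}^{\mathscr{D}}f\lesssim (M^{\mathscr{D}})^{\beta+1}f$, and finishing with a Kolmogorov layer--cake step at the endpoint.

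\textbf{Main obstacle.} The last Orlicz--Fefferman--Stein step is the technical heart: the $L(\log L)^{\beta}$ bump must be propagated from the sparse form to the final $L\log^{\beta}L(w)$ norm of $f$ with only a $\log^{1+\beta}(\mathrm{e}+[w]_{A_{\infty}})$ loss, the extra ``$+1$'' power arising from integrating a weak-$L^{1}$ estimate across the endpoint. The other steps amount to a routine reprise of the Li--P\'erez--Rivera-Rios--Roncal scheme now adapted to an Orlicz-bumped sparse domination.
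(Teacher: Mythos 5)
The paper does not actually prove Lemma \ref{lem3.4}; it is cited to Hu--Lai--Xue \cite{hulai}, with Appendix C of \cite{lpr} referenced for $\beta=0$, so there is no internal proof to compare against. Your steps up through the disjointness estimate
$\sum_{Q}\|f\|_{L(\log L)^{\beta},Q}\,w(E_{Q})\le \int_{\mathbb{R}^{d}} M_{L(\log L)^{\beta}}^{\mathscr{D}}f\cdot w\,dx$
are a correct reprise of the standard LPRRR reduction (duality, sparse form, reverse H\"older with $r-1\asymp[w]_{A_{\infty}}^{-1}$, $A_1$, sparseness). The last step, however, is where the argument genuinely breaks: the claimed ``weighted Orlicz--Fefferman--Stein inequality''
$\int_{\mathbb{R}^{d}} M_{L(\log L)^{\beta}}^{\mathscr{D}}f\cdot w\,dx\lesssim \log^{1+\beta}(\mathrm{e}+[w]_{A_{\infty}})\int_{\mathbb{R}^{d}}|f|\log^{\beta}(\mathrm{e}+|f|)\,w\,dx$
is false. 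Already for $\beta=0$, $f=\chi_{[0,1]^{d}}$ and $w\equiv 1$, the right-hand side is $1$ while the left-hand side is $\int M^{\mathscr{D}}\chi_{[0,1]^{d}}=\infty$, since $M^{\mathscr{D}}\chi_{[0,1]^{d}}(x)\sim|x|^{-d}$. No strong $L^{1}(w)$ bound for the maximal function can hold, with or without a logarithmic loss in $[w]_{A_{\infty}}$; the Kolmogorov/layer-cake device you invoke only yields weak $L^{1}(w)$ control, not the strong inequality you wrote.

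The loss occurs earlier than step~8: when you replace $\langle g\rangle_{Q,r}$ by $2\langle w\rangle_{Q}$ you discard $\chi_{\Omega_{1}}$, and with it the weak-type structure that makes the problem tractable. After that the only available bound is $\int M_{L(\log L)^{\beta}}^{\mathscr{D}}f\cdot w$, which diverges. The actual proof must keep track of the level set throughout; concretely, it combines a Calder\'on--Zygmund decomposition of $f$ at height $\lambda$ with the sparse bound applied to the good part in $L^{p_{0}}(w)$ for $p_{0}-1\simeq 1/\log(\mathrm{e}+[w]_{A_{\infty}})$ and a separate treatment of the bad part that exploits the sparseness and the $A_1$ condition without passing to $\|Mf\|_{L^{1}(w)}$. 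Your ``main obstacle'' paragraph correctly identifies this as the technical heart, but the route you propose through a strong $L^{1}(w)$ Fefferman--Stein inequality cannot be salvaged.
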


This was proved Hu, Lai and Xue \cite{hulai}, see also Appendix C in \cite{lpr} for the case of $\beta=0$.

{\it Proof of Theorem \ref{dingli1.4}}.   We assume that $\|\Omega\|_{L^{\infty}(S^{n-1})}=1$. Let $p\in (1,\,\infty)$ and $w\in A_p(\mathbb{R}^n)$. Set $\tau_w=1+\frac{1}{2^{11+n}[w]_{A_{\infty}}}$. For a bounded function $f$ with compact support, it follows from Theorem 1.3 and Lemma \ref{lem3.3} that
\begin{eqnarray*}
\int_{\mathbb{R}^n}\big(\mu_{\Omega}(f)(x)\big)^pw(x)dx&\lesssim&\tau_w'^{\frac{p}{2}}\sum_{Q\in\mathcal{S}}\langle |f|\rangle_Q^{p}\langle w\rangle_{Q,\,\tau_w}|Q|\\
&\lesssim&[w]_{A_{\infty}}^{\frac{p}{2}}\sum_{Q\in\mathcal{S}}\langle |f|\rangle_Q^{p}w(Q)\\
&=&[w]_{A_{\infty}}^{\frac{p}{2}}\int_{\mathbb{R}^n}\big(\mathcal{A}_{\mathcal{S}}f(x)\big)^pw(x)dx\\
&\lesssim&[w]_{A_p}
[w^{1-p'}]_{A_{\infty}}[w]_{A_{\infty}}^{\frac{p}{2}}\|f\|_{L^p(\mathbb{R}^n,\,w)}^p.
\end{eqnarray*}
This establishes (\ref{eq:1.4}) for $p\in (1,\,2)$.

We now prove (\ref{eq:1.4}) for the case of $p\in (2,\,\infty)$. Let $f$ be a bounded function with compact support, $p\in (1,\,\infty)$ and $w\in A_p(\mathbb{R}^n)$. Set $r=1+\frac{1}{4p\tau_{w}}$ and $s=1+\frac{1}{2p}$. Then $(r-1/s)s'<\tau_w$ and $rs<1+\frac{5}{6p}<(p/2)'$. By Theorem \ref{dingli1.3}, there exists a sparse family of cubes $\mathcal{S}$, such that for any $g\in L_{{\rm loc}}^r(\mathbb{R}^n)$, such that
\begin{eqnarray}\label{eq3.3}
\int_{\mathbb{R}^n}\big(\mu_{\Omega}(f)(x)\big)^2g(x)dx\lesssim r'\sum_{Q\in\mathcal{S}}\langle |f|\rangle_{Q}^2\langle |g|\rangle_{Q,\,r}|Q|.
\end{eqnarray}
By Lemma 2.1, we may assume that $\mathcal{S}\subset \mathscr{D}$ with $\mathscr{D}$ a dyadic lattice. A sraightforward computation involving H\"older's inequality, Lemma \ref{lem3.2} and Lemma \ref{lem3.3} shows that
\begin{eqnarray*}
\langle|g|w\rangle_{Q,\,r}&\leq& \langle |g|^{sr}w\rangle_Q^{\frac{1}{sr}}\langle w^{(r-\frac{1}{s})s'}\rangle_{Q}^{\frac{1}{rs'}}\\
&\lesssim&\langle |g|\rangle_{sr,\,Q}^w\langle w\rangle_Q^{1-\frac{1}{sr}}\lesssim\langle M_{sr}^{\mathscr{D},\,w}g\rangle_{Q}^w \langle w\rangle_{Q},
\end{eqnarray*}
where $$\langle h\rangle_{Q}^w=\frac{1}{w(Q)}\int_{Q}h(y)w(y),$$
and
$$M_{sr}^{\mathscr{D},\,w}h(y)=\sup_{Q\ni y,\,Q\in\mathscr{D}}\Big(\frac{1}{w(Q)}\int_{Q}|h(z)|^{sr}w(z)dz\Big)^{\frac{1}{sr}}.
$$
Thus by (\ref{eq3.3}),
\begin{eqnarray*}
\int_{\mathbb{R}^n}\big[\mu_{\Omega}(f w^{1-p'})(x)\big]^2|g(x)|w(x)dx&\lesssim &[w]_{A_{\infty}}\sum_{Q\in\mathcal{S}}\langle |f|w^{1-p'}\rangle_{Q}^2\langle |g|w\rangle_{Q,\,r}|Q|\\
&\lesssim&[w]_{A_{\infty}}\sum_{Q\in\mathcal{S}}\langle |f| w^{1-p'}\rangle_{Q}\langle  M_{sr}^{\mathscr{D},\,w}g\rangle_{Q}^ww(Q).
\end{eqnarray*}
Recall that $rs<(p/2)'$ and $M_{sr}^{\mathscr{D},\,w}$ is bounded on $L^{(\frac{p}{2})'}(\mathbb{R}^n,\,w)$ with bound depending only on $n$ and $p$.
We deduce from Lemma \ref{lem3.2} that
\begin{eqnarray*}
&&\int_{\mathbb{R}^n}\big[\mu_{\Omega}(f w^{1-p'})(x)\big]^2|g(x)|dx\\
&&\quad\lesssim [w]_{A_p}^{\frac{2}{p}}\big([w]_{A_{\infty}}^{1-\frac{2}{p}}+[w^{1-p'}]_{A_{\infty}}^{\frac{2}{p}}\big) [w]_{A_{\infty}}\|f\|_{L^p(\mathbb{R}^n,\,w^{1-p'})}^2\|M_{sr}^{\mathscr{D},\,w}g\|_{L^{(\frac{p}{2})'}(\mathbb{R}^n,\,w)}\\
&&\quad\lesssim [w]_{A_p}^{\frac{2}{p}}\big([w]_{A_{\infty}}^{1-\frac{2}{p}}+[w^{1-p'}]_{A_{\infty}}^{\frac{2}{p}}\big) [w]_{A_{\infty}}\|f\|_{L^p(\mathbb{R}^n,\,w^{1-p'})}^2\|g\|_{L^{(\frac{p}{2})'}(\mathbb{R}^n,\,w)}.
\end{eqnarray*}
This in turn implies (\ref{eq:1.4}) for the case of $p\in (2,\,\infty)$.

Finally, (\ref{eq:1.5}) follows from Theorem \ref{dingli1.3} and Lemma \ref{lem3.4}. This completes the proof of Theorem \ref{dingli1.3}.\qed

{\it Proof of Theorem \ref{dingli1.5}}. We assume that $\|\Omega\|_{L^{\infty}(S^{n-1})}=1$. Let $w\in A_{\infty}(\mathbb{R}^n)$. Repeating the proof of Theorem 1.1 in \cite{lpr}, we deduce from Theorem \ref{dingli1.3} that
for bounded function  $f$ with compact support,
$$\int_{\mathbb{R}^n}\big(\mu_{\Omega}(f)(x)\big)^pw(x)dx\lesssim [w]^{\frac{p}{2}+1}_{A_{\infty}}\int_{\mathbb{R}^n}\big(Mf(x)\big)^pw(x)dx,\,\,\,\text{if} \,\,p\in [1,\,2],$$
and for bounded functions $f$ and $g$ with compact supports,
$$\int_{\mathbb{R}^n}\big(\mu_{\Omega}(f)(x)\big)^2g(x)w(x)dx\lesssim [w]^{2}_{A_{\infty}}
\|Mf\|_{L^p(\mathbb{R}^n,\,w)}^2\|g\|_{L^{(\frac{p}{2})'}(\mathbb{R}^n,\,w)},\,\,\text{if} \,\,p\in (2,\,\infty].$$
Our desired conclusion follows from the last two inequalities.
\qed
\medskip

{\bf Acknowledgement}.
The authors would like to thank Dr. Kangwei Li for his helpful suggestions concerning the weighted estimates for sparse operators.

\end{document}